\documentclass[11pt, twoside]{amsart}
\usepackage{amsmath,mathtools}
\usepackage{amsmath, amsthm, amssymb, amsfonts, enumerate}
\usepackage[colorlinks=true,linkcolor=blue,urlcolor=blue,citecolor=blue]{hyperref}
\usepackage{dsfont}
\usepackage{color}
\usepackage{todonotes}
\usepackage{epstopdf}
\usepackage{bbm}

\usepackage[utf8]{inputenc}
\usepackage{bm}
\usepackage{amsfonts}
\usepackage{amsfonts}
\usepackage{textcomp}
\usepackage{amssymb}
\usepackage{float}
\usepackage{tikz}
\usepackage{epsfig}
\usepackage{amsmath}
\usepackage[english]{babel}
\usepackage{a4}
\usepackage{enumerate}
\usepackage{tcolorbox}
\usepackage{soul}

\usepackage{geometry}
\geometry{left=2.5cm, right=2.5cm, vmargin=3cm}
\newcommand{\stkout}[1]{\ifmmode\text{\sout{\ensuremath{#1}}}\else\sout{#1}\fi}

\usepackage[T1]{fontenc}

\newtheorem{theorem}{Theorem}[section]

\newtheorem{remark}[theorem]{Remark}
\newtheorem{assumption}[theorem]{Assumption}
\newtheorem{lemma}[theorem]{Lemma}

\newtheorem{definition}[theorem]{Definition}

\newtheorem{example}[theorem]{Example}

\newcommand{\ds}{\displaystyle}

\def \E{\mathsf{E}}

\def \P{\mathsf{P}}
\def \R{\mathbb{R}}

\def\d{\mathrm{d}}

\definecolor{red}{rgb}{1.0,0.0,0.0}

\definecolor{blu}{rgb}{0.0,0.0,1.0}

\definecolor{gre}{rgb}{0.03,0.50,0.03}

\usepackage[T1]{fontenc}

\usepackage{amsmath,mathtools}
\usepackage{amsmath, amsthm, amssymb, amsfonts, enumerate}

\usepackage[colorlinks=true,linkcolor=blue,urlcolor=blue]{hyperref}
\usepackage{dsfont}
\usepackage{color}
\usepackage{geometry}
\usepackage{todonotes}
\usepackage{epstopdf}
\geometry{hmargin=2.8cm, vmargin=2.5cm}


\newtheorem{proposition}[theorem]{Proposition}

\def \E{\mathsf{E}}
\def \P{\mathsf{P}}

\def \R{\mathbb{R}}

\def\d{\mathrm{d}}

\definecolor{red}{rgb}{1.0,0.0,0.0}

\definecolor{blu}{rgb}{0.0,0.0,1.0}

\definecolor{gre}{rgb}{0.03,0.50,0.03}

\definecolor{darkviolet}{rgb}{0.58, 0.0, 0.83}



\numberwithin{equation}{section}

\def\sqr#1#2{{\vcenter{\vbox{\hrule height .#2pt \hbox{\vrule
 width .#2pt height#1pt \kern#1pt \vrule
width .#2pt} \hrule height .#2pt}}}}

\def\ds{\begin{displaystyle}}
\def\eds{\end{displaystyle}}

\def\<{\left\langle }
\def\>{\right\rangle }

\def\R{\mathbb R}
\def\N{\mathbb N}

\def\1{\mathbf 1}
\def\to{\rightarrow}

\title{On Mean Field Games in Infinite Dimension}
\vspace{-1.5truecm}

\author[Federico]{Salvatore Federico}
\author[Gozzi]{Fausto Gozzi}
\author[{\'{S}}wi{\k{e}}ch]{Andrzej {\'{S}}wi{\k{e}}ch}

\address{S.~Federico: Dipartimento di Matematica, Universit\`a ``Alma Mater'' di Bologna, Piazza di Porta San Donato, Bologna, Italy}
\email{\href{mailto:s.federico@unibo.it}{s.federico@unibo.it}}
\address{F.~Gozzi: Dipartimento di Economia e Finanza, LUISS University, Viale Romania 32, Rome, Italy}
\email{\href{mailto:fgozzi@luiss.it}{fgozzi@luiss.it}}
\address{A.~{\'{S}}wi{\k{e}}ch: School of Mathematics, Georgia Institute of Technology, 
Atlanta, GA 30332, USA}
\email{\href{mailto:swiech@math.gatech.edu}{swiech@math.gatech.edu}}

\date{\today}

\date{\today}
\begin{document}
\maketitle

\begin{abstract}
{
%
We study a Mean Field Games (MFG) system in a real, separable infinite dimensional Hilbert space. The system consists of a second order parabolic type equation, called Hamilton-Jacobi-Bellman (HJB) equation in the paper, coupled with a nonlinear Fokker-Planck (FP) equation. Both equations contain a Kolmogorov operator. Solutions to the HJB equation are interpreted in the mild solution sense and solutions to the FP equation are interpreted in an appropriate weak sense. We prove well-posedness of the considered MFG system under certain conditions. The existence of a solution to the MFG system is proved using Tikhonov's fixed point theorem in a proper space. Uniqueness of solutions is obtained under typical separability and Lasry-Lions type monotonicity conditions.
}

\end{abstract}

\bigskip
{\emph{Keywords}}:  {\small{Mean field games, PDE in infinite dimensional spaces, Hamilton-Jacobi-Bellman equation, Fokker-Planck equation,  Wasserstein space, mild solution, stochastic differential equation in infinite dimension.}}

\medskip

{\emph{MSC (2020)}}:  
35Q89; 
35R15;
49N80; 
35Q84; 49L12; 
60H15; 
91A16.




%
%
%
%
%
%
%
%
%
%
%
%
%

\section{Introduction}\label{SE:INTRO}

In this paper, we are concerned  with a class of Mean Field Games (MFG) problems in a real separable Hilbert space $(H,\langle\cdot,\cdot\rangle)$. The analytical framework is the following. Let $\mathcal{P}_{1}(H)$ denote the space of Borel probability measures on $H$ having finite first moment and let $T>0$. We consider the following system of coupled Hamilton-Jacobi-Bellman (HJB) and nonlinear Fokker-Planck (FP) equations for  functions $v:[0,T]\times H\to \mathbb{R}$ and $m:[0,T]\to \mathcal{P}_1(H)$,
\begin{align}
- {\partial_t}v(t,x)- {L} v (t,x) +\mathcal{H}(x, Dv(t,x),m(t)) =0, \ \ \ v(T,\cdot)=G(\cdot,m(T)), \ \ \ \ \ \label{HJBbis}\tag{\textbf{HJB}}
\\\nonumber\\
\partial_{t}m(t) - {L}^{*} m(t)-\mbox{div}\Big(\mathcal{H}_p(x,Dv(t,x),m(t))\,m(t)\Big)=0,  \ \ \ m(0)=m_{0}, \ \ \ \ \ \ \label{FPbis}\tag{\textbf{FP}}
\end{align}
where
\begin{enumerate}[(i)]
\item $m_{0}\in\mathcal{P}_{1}(H), \ G:H\times \mathcal{P}_{1}(H)\to\R$;
\item $L$ is the operator formally defined on functions $\phi:H\to \R$ by
$$L\phi(x)= \langle Ax , D\phi(x)\rangle + \frac{1}{2}\mbox{Tr}[D^2 \phi(x)],$$
where
$A:D(A)\subseteq H\to H$ is a closed and densely defined linear operator;
{\item $L^{*}$ is, formally, the adjoint of $L$ and the operator $\mathbf{div}$ is, formally, the opposite of the adjoint of the gradient: both are used here only for notational convenience as the rigorous definition of a solution to \eqref{FPbis} will not involve them, see Section \ref{sec:FP} below;}
\item $\mathcal{H}:H\times H\times\mathcal{P}_{1}(H)\to\R$   and $\mathcal{H}_{p}$ denotes the  Fr\'echet derivative of $\mathcal{H}$ with respect to the second argument.
\end{enumerate}
{We point out that we slightly abuse terminology here, since the first equation is not necessarily a Hamilton-Jacobi-Bellman equation as the function $\mathcal{H}$ can be very general. However, since our primary motivation comes from this case, we will call it an HJB equation.}

The theory of MFG originated from the pioneering works of Lasry and Lions,  and of Huang, Caines, and Malhamé (see \cite{huang2007large, LasryLionsCRMI06, LasryLionsCRMII06, LasryLionsJJM07}). This theory was primarily developed to provide an analytical framework for (stochastic) differential games involving a large number of symmetric players. In the limit, as the number of players approaches infinity and assuming that the actions of individual players do not directly influence the overall population dynamics, the concept of Nash equilibrium leads to the study of a coupled system of PDEs of the type given by equations \eqref{HJBbis} and \eqref{FPbis}.
Since the inception of this theory, substantial progress has been made in finite-dimensional settings, both theoretically and practically. Notable references for this research area include the book  \cite{carmona2018probabilistic} and the survey paper \cite{MR4214774}. 
However, contributions to the theory of Mean Field Games (MFG) in infinite-dimensional spaces remain surprisingly limited. To our knowledge, the literature consists of only a few essential papers, such as \cite{fouque2018mean}, which addresses systemic risk by taking the limit of the $N$-player model in \cite{carmona2018systemic}, and recent works \cite{federico2024linear,liu2024hilbert}, both of which focus on the linear-quadratic case. This scarcity is somewhat in contrast with the established theory of stochastic optimal control in infinite-dimensional spaces and its connection with Hamilton-Jacobi-Bellman (HJB) equations, which has been well developed over several decades, beginning with foundational works by Barbu and Da Prato (see, e.g., \cite{BarbuDaPrato83book}). For a general overview of this subject and its applications, we refer to the recent monograph \cite{FGS}, and see also \cite{DaPratoZabczyk02,Nisio}.

In this paper, we aim to bridge this gap by providing a preliminary contribution to the study of MFG in infinite dimensions beyond the linear-quadratic case. Building upon existing results for both HJB and Fokker-Planck equations in infinite-dimensional settings, we investigate the nonlinear coupled system of MFG equations \eqref{HJBbis}--\eqref{FPbis}. Our main contribution is the proof of well-posedness of this system under certain conditions. The HJB equation \eqref{HJBbis} is interpreted in the mild form, allowing us to use well-known theories of mild solutions to partial differential equations in Hilbert spaces. However, this approach requires stronger assumptions on the operator $A$ to ensure certain smoothing properties of the associated Ornstein-Uhlenbeck transition semigroup.

The Fokker-Planck equation \eqref{FPbis} is interpreted in a weak form using a special class of test functions. Existence of a solution to the system \eqref{HJBbis}--\eqref{FPbis} is established with a typical MFG fixed-point argument via Tikhonov's fixed-point theorem, which requires the existence of a unique weak solution to the linear \eqref{FPbis} when the term $\mathcal{H}_p(x,Dv(t,x),m(t))$ is fixed. Fokker-Planck-type equations in Hilbert spaces have been widely studied, with various results available on existence and uniqueness (see \cite{BDaPR.2010,bogachev2007uniqueness,BDPR,bogachev2009fokker, DaPFR.2013,RZZ.2014,Wie.2013}). To show that our linear \eqref{FPbis} has a weak solution, we follow a common approach of linking such an equation to a stochastic differential equation (SDE) in $H$ and noting that the laws of solutions to this SDE are weak solutions of the linear \eqref{FPbis}. We apply results from \cite{DFPR} on SDEs in Hilbert spaces with bounded measurable drift, adapting them to our setting. Uniqueness of weak solutions then follows easily from a result in \cite{BDPR}, chosen here for its clarity and accessibility. However, this approach imposes additional restrictions on the operator $A$, ultimately leading to fairly strict assumptions on $A$.

Uniqueness of solutions to the MFG system \eqref{HJBbis}--\eqref{FPbis} is obtained under typical monotonicity and separability conditions on $\mathcal{H}$ (see \cite{MR4214774} and \cite{LasryLionsJJM07}). To carry out the uniqueness argument, which is now standard in the finite-dimensional case, we approximate the mild solution of the Kolmogorov equation obtained by subtracting two different solutions to the \eqref{HJBbis} equations in the \eqref{HJBbis}--\eqref{FPbis} system by constructing smooth solutions of approximating Kolmogorov equations that belong to the class of test functions used in the definition of a weak solution for the Fokker-Planck equation. Since this is the first study on the subject, we chose not to address the most general case to avoid obscuring the main ideas with excessive technical detail. A possible generalization could involve a more flexible operator $L$, in which the term $\mbox{Tr}[D^2 \phi(x)]$, associated with the cylindrical Wiener process $W$ in the underlying SDE, is replaced by $\mbox{Tr}[QD^2 \phi(x)]$ for some self-adjoint operator $Q$ (corresponding to a more general additive noise in the SDE). This could potentially relax the assumptions on $A$. Another interesting direction would be to interpret equation \eqref{HJBbis} in the viscosity sense, which could also include the case of first-order MFG. We plan to investigate this in future work.

Regarding possible applications, it would be interesting to try to relax the assumptions on the operator $A$ to cover potentially interesting cases. In particular, it would be interesting to treat the case of first order operators $A$, some of which appear in financial and economic applications, such as  models of systemic risk with delays (as in \cite{fouque2018mean}) or models of 
optimal investment with vintage capital (see  \cite{FAGGIAN2021102516, FEICHTINGER2006143} in the case of a single firm with vintage structure; \cite{calvia2024mean} in the case of a continuum of homogenous firms but without vintage structure).

The plan of the paper is the following. In Section \ref{SE:RM}, we introduce the basic notation and assumptions, presenting preliminary technical results on the well-posedness of the SDE related to the (linear) Fokker-Planck equation, along with some estimates for its solutions. Section \ref{sec:FP} is dedicated to the existence and uniqueness of weak solutions for the Fokker-Planck equation \eqref{FP}. In Section \ref{sec:HJB}, we define the notion of a mild solution to \eqref{HJBbis} and collect some foundational results on mild solutions. In Section \ref{sec:MFGsystem}, we show the existence of solutions to the MFG system \eqref{HJBbis}--\eqref{FPbis}, while the uniqueness of solutions is established in Section \ref{sec:exist-uniq}. An example that satisfies the manuscript’s assumptions is discussed in Section \ref{sec:example}. Finally, in Appendix \ref{app:A}, we present a result on the compactness of sets in spaces of probability measures on a Hilbert space, and in Appendix \ref{app:B}, we discuss essential concepts and results related to Kolmogorov equations in Hilbert spaces that are needed in this paper.

\section{Preliminaries and Assumptions}\label{SE:RM}


{We recall that $H$ is a real separable Hilbert space with  inner product $\langle\cdot,\cdot\rangle$. We will write $|\cdot|$ for the norm in $H$; if needed we stress $H$ in the notation writing $|\cdot|_{H}$. We always identify $H$ with its dual space. For $R>0$, we denote $B_R:=\{x\in H: |x|\leq R\}$. Throughout the paper,  $\mathbb{N}$ will denote the set of natural numbers without $0$.}

\subsection{Basic spaces and notations}
We introduce the notation for various functional and operator spaces used in the paper. Below, by  $I$ we denote an interval in $\R$. {Given a function $u:I\times H\to \R,(t,x)\mapsto u(t,x)$, we will write $\partial_t u$ for the partial derivative of $u$ with respect to $t$ and $Du,D^2u$, respectively, for the first and second order Fr\'echet derivatives of $u$ with respect to the $x$-variable.}
For a bounded function between normed spaces, we denote its sup-norm in the usual way by $\|\cdot\|_{\infty}$.

Below is a list of basic spaces used in the paper.
\begin{enumerate}[(i)]
\item $\mathcal{L}(H)$ is the space of bounded linear operators from $H$ to $H$.
\medskip
\item $\mathcal{L}^{+}(H)$ is the space of bounded linear self-adjoint nonnegative operators from $H$ to $H$.
\medskip
\item $\mathcal{L}_{1}(H)$ is the space of bounded linear trace-class operators from $H$ to $H$.
\medskip
\item $\mathcal{L}_{1}^{+}(H)=\mathcal{L}^{+}(H)\cap \mathcal{L}_{1}(H)$.
\medskip
\item $C_{b}(H)$ is the space of continuous and bounded functions $u:H\to\R$.
\medskip
\item $C_{b}(I\times H)$ is the space of  continuous and bounded functions $u:I\times H\to\R$.
\medskip
\item  $C_{b}(I\times H;H)$ is the space of continuous and bounded functions $u:I\times H\to H$.
\medskip
\item  $C_{b}(I\times H;\mathcal{L}(H))$ (respectively,  $C_{b}(I\times H;\mathcal{L}_1(H))$)  is the space of continuous and bounded functions $u:I\times H\to \mathcal{L}(H)$ (respectively, $u:I\times H\to \mathcal{L}_{1}(H)$).
\medskip
\item  {$C^{0,1}_{b}(I\times H)$ is the space of continuous and bounded functions $u:I\times H\to\R$,  which are Fr\'echet differentiable with respect to the variable $x$ and such that $Du\in C_b(I\times H;H)$.}
\medskip
{\item $UC_b(H)$ (respectively, $UC_b(H;H)$, $UC_b(H;\mathcal{L}_{1}(H))$) is the space of bounded and uniformly continuous functions $u:H\to\R$ (respectively, $u:H\to H$, $u:H\to \mathcal{L}_{1}(H)$).
\medskip
\item $UC_b^2(H)=\{u:H\to\R: \ u\in UC_b(H),Du\in UC_b(H;H),D^2u\in UC_b(H;\mathcal{L}(H))\}$.
\medskip
\item  Finally, 
\begin{align*}
C_b^{1,2}(I\times H)=&\big\{u\in C_{b}(I\times H): \  \partial_tu\in C_{b}(I\times H), \ Du\in C_b(I\times H;H),\\& \ \ \ \ D^2u\in C_b(I\times H;\mathcal{L}(H))\big\}.
\end{align*}
}
\end{enumerate}

We denote by $\mathcal{M}(H)$ the topological vector space of finite signed measures on $H$ endowed with the weak topology, i.e., the locally convex topology on $\mathcal{M}(H)$ induced by the family of seminorms
\begin{equation}\label{semi}
\left|\mu\right|_{f}:=\left|\int_{H} f(x) \mu(\d x)\right|, \ \ \ \ f\in C_{b}(H).
\end{equation}
By $\mathcal{P}(H)$ we denote the space of Borel probability measures on $H$. The weak topology defined on it is the one inherited by the weak topology of $\mathcal{M}(H)$ defined above.
We denote by $\mathcal{P}_1(H)$ the subset of $\mathcal{P}(H)$ of the probability measures having finite first moment.
{In $\mathcal{P}_1(H)$ we have the Monge-Kantorovich distance (see, e.g., \cite{AGS,Villani})
 \begin{equation}\label{definition_wasserstein_distance}
        \mathbf{d}_{1}(\mu,\nu) = \inf_{\gamma\in \Gamma(\mu,\nu)} \int_{H\times H} |x-y| \gamma(\mathrm{d}x,\mathrm{d}y),
    \end{equation}
    where $\Gamma(\mu,\nu)$ is the set of all Borel probability measures on $H \times H$ with first and second marginals $\mu$ and $\nu$, respectively. Unless differently specified, the space $\mathcal{P}_{1}(H)$ will be considered endowed with the topology induced by $\mathbf{d}_{1}$ which makes it a complete metric space. We notice that, if $(\Omega,\mathcal{F},\P)$ is a probability space and $X,Y\in L^1(\Omega;H)$ are such that  $\mathcal{L}(X)= \mu$,  $\mathcal{L}(Y) = \nu$, then $\gamma=(X\times Y)_{\texttt{\#}}\P\in \Gamma(\mu,\nu)$, so that
    \[
    \mathbf{d}_{1}(\mu,\nu)\leq  \int_{H\times H} |x-y| \,\,\gamma(\mathrm{d}x,\mathrm{d}y)=\int_{\Omega} |X(\omega)-Y(\omega)|\mathrm{d}\P(\omega)=\E[|X-Y|].
    \]}
We will deal with the space
\begin{equation}
\label{S}
\mathcal{S}=C\big([0,T]; (\mathcal{P}_1(H),\mathbf{d}_{1})\big).
\end{equation}
It is a complete metric space endowed with the metric
\[
\rho_\infty(\mu,\nu)=\sup_{t\in[0,T]} \mathbf{d}_{1}(\mu(t),\nu(t)).
\]
We will also use the symbol $\rho_\infty$ to denote the induced metric topology.

\subsection{Weighted spaces}

Given any $\gamma >0$, we define
\begin{multline}
\label{eqB:Cmgammadefbis}
 C_{b,\gamma }\left( [0,T) \times H \right)
: =
\big\{ f\colon [0,T)\times H \to \R \hbox{ s.t. }
f \in {C}_{b} ([0,\tau] \times H) \  \forall \tau \in (0,T)
\\
\; \text{and }
(t,x) \mapsto (T-t)^{\gamma}f(t,x) \hbox{ belongs to } {C}_{b}([0,T) \times H) \big\}.
\end{multline}
Similarly, we define
 \begin{multline}
\label{eqB:Cmgammadefbis1}
 C_{b,\gamma }\left( [0,T) \times H; H \right)
: =
\big\{ w\colon [0,T)\times H \to H \hbox{ s.t. }
f \in {C}_{b} ([0,\tau] \times H;H) \  \forall \tau \in (0,T)
\\
\; \text{and }
(t,x) \mapsto (T-t)^{\gamma}f(t,x) \hbox{ belongs to } {C}_{b}([0,T) \times H;H) \big\}.
\end{multline}
The spaces above are Banach spaces when endowed with the norm (see e.g.
\cite{Gozzi95,Gozzi02} and \cite{Cerrai95})
$$
\|f\|_{{C_{b,\gamma }\left([0,T) \times H \right)}}
:=
\sup_{(t,x)\in [0,T)\times H}
(T-t)^{\gamma}\left\vert f(t,x)\right\vert, 
$$
$$ \|f\|_{{C_{b,\gamma }\left([0,T) \times H;H \right)}}
:=
\sup_{(t,x)\in [0,T)\times H}
(T-t)^{\gamma}\left\vert f(t,x)\right\vert_{H}.
$$
We will often simply write $\|f\|_{{C_{b,\gamma }}}$. Moreover, we define
\begin{multline}
\label{eq4:Cmgammadefbis}
 C^{0,1}_{b,\gamma }\left( [0,T] \times H\right)
: =
\left\{ f \in C_b([0,T]\times H) :
f \in {C}^{0,1}_{b} ([0,\tau] \times H) \; \forall \tau \in (0,T)
\right.\\
\left.
\; \text{and }
(t,x) \mapsto (T-t)^{\gamma}Df(t,x) \hbox{ belongs to } {C}_{b}([0,T) \times H;H) \right\}.
\end{multline}
The space above is a Banach space when endowed with the norm (see e.g.
\cite{Gozzi95,Gozzi02} and \cite{Cerrai95})
\begin{equation*}
\|f\|_{{C^{0,1}_{b,\gamma }\left( [0,T] \times H \right)}}
:=\|f\|_{\infty}+
\sup_{(t,x)\in [0,T)\times H}
(T-t)^{\gamma}\left\vert Df(t,x)\right|_{H}.
\end{equation*}
We will sometimes simply write $\|f\|_{{C^{0,1}_{b,\gamma }}}$.


%
\subsection{The operator $A$ and the generated semigroup}\label{sub:A}
Concerning the operator $A$, we assume the following.

\begin{assumption}\label{ass:A}
{$A:D(A)\subseteq H \to H$ is closed, densely defined, negative and self-adjoint.} Moreover there exists $\delta>0$ such that $(-A)^{-1+\delta}\in\mathcal{L}_{1}^{+}(H)$.
\end{assumption}
Given Assumption \ref{ass:A},
$A^{-1}$ is compact and self-adjoint. Therefore,
 there exist an orthonormal basis $\{\mathbf{e}_{k}\}_{k\in\N}$ and a decreasing sequence $(\lambda_{k})_{k\in\N}\subset(-\infty,0)$ such that $\lambda _{n}\to -\infty$, with respect to which $A$ is diagonal:
$$
A\mathbf{e}_{k}=\lambda_{k}\mathbf{e}_{k}, \ \ \ k\in\N,
$$
and

\begin{equation}\label{condo}
\sum_{k\in\N} |\lambda_{k}|^{-1+\delta}<\infty.
\end{equation}
Moreover, $A$ generates an analytic $C_{0}$-semigroup of contractions $\{e^{tA}\}_{t\geq 0}\subseteq \mathcal{L}(H)$ and
\begin{equation}\label{eq:sem}
e^{tA}\mathbf{e}_{k}=e^{\lambda_{k}t}\mathbf{e}_{k}, \ \ \  k\in\N.
\end{equation}
%

 \begin{remark}\label{rem:assA}
 \begin{enumerate}[(i)]
 \medskip
 \item[]
 \medskip
\item An example of $A$ that satisfies the above Assumption \ref{ass:A} is the Laplace operator $\partial_{xx}$ in $H=L^{2}([0,1])$ with zero Dirichlet boundary condition.
 \medskip
 \item It can be verified from \eqref{condo} that Assumption \ref{ass:A} implies typical conditions on the operator $A$ ensuring existence and uniqueness of mild solutions of equations in infinite dimensional spaces (see \cite[Ch.\,4]{FGS}), that is:
 \medskip
 \begin{enumerate}[(a)]
 \item One has
 $$
 Q_{t}:=\int_{0}^{t} e^{sA} e^{sA^*}\,\d s\in\mathcal{L}_{1}^{+}(H) \ \ \ \forall t>0
 $$
and there exists $\alpha>0$ such that
 $$
  Q_{t}^{(\alpha)}:=\int_{0}^{t} s^{-\alpha}e^{sA} e^{sA^*}\,\d s \in \mathcal{L}_{1}^{+}(H) \ \ \ \forall t>0.
 $$
 \item
$e^{tA}(H)\subset Q_{t}^{1/2}(H)$ for every $t>0$ and
there exist $C_{0}>0$ and $\gamma\in (0,1)$ such that
$$
|Q_{t}^{-1/2}e^{tA}| \le C_0 t^{-\gamma}, \qquad \forall t \geq 0,
$$
where $Q_{t}^{-1/2}$ is the pseudoinverse of $Q_{t}^{1/2}$ (see \cite[Def.\,B.1]{FGS}).
\end{enumerate}
\end{enumerate}
 \end{remark}

\subsection{The probabilistic framework and the reference SDE}\label{sub:prob}
We now define a probabilistic framework.
We consider a complete filtered probability space $(\Omega,\mathcal{F},(\mathcal{F}_{t})_{t\geq 0},\P)$ supporting a sequence $\{\beta_{k}(t)\}_{k\in\mathbb{N}}$ of independent one dimensional Brownian motions.
 Given the orthonormal basis $\{\mathbf{e}_{k}\}_{k\in\mathbb{N}}$ of Assumption \ref{ass:A},
we  consider the cylindrical Wiener process $\{W_{t}\}_{t\geq 0}$, formally written as
$$W_{t}=\sum_{k\in\mathbb{N}}\beta_{k}(t)\mathbf{e}_{k}.$$
Under Assumption \ref{ass:A},  the stochastic convolution (see Remark \ref{rem:assA}(ii) and \cite[Ch.\,5]{DaPratoZabczyk14})
\begin{equation}\label{stconv}
W_{t}^{A}:=\int_{0}^{t} e^{(t-s)A}\d W_{s}, \qquad t \ge 0,
\end{equation}
 is a well-defined centered  Gaussian process in the Hilbert space $H$ with covariance operator
 $$
 Q_{t}:=\int_{0}^{t} e^{2sA}\,\d s \in \mathcal{L}_{1}^{+}(H)
 $$
%
%
admitting a continuous version.
The process  $$X_{t}^{0,x}:=e^{tA}x+ W_{t}^{A}, \ \ \ t\geq 0,$$
 can be seen as a generalized (called \emph{mild}) solution to the SDE
 $$
\d X_{t}=AX_t\,\d t+\d W_{t}, \ \ \ X_{0}=x\in H.
 $$
$X_{t}^{0,x}$ is called the Ornstein-Uhlenbeck process and is associated to the Markov transition semigroup
\begin{equation}\label{semigroup}
[R_{t}\varphi](x)=\E\left[\varphi (X_{t}^{0,x})\right]=\int_{H}\varphi(y) \mathcal{N}(e^{tA}x,Q_{t})(\d y), \ \ \ \varphi\in C_b(H),
\end{equation}
{called the Ornstein-Uhlenbeck semigroup. Above, $\mathcal{N}(a,Q)$ is the Gaussian measure in $H$ with mean $a\in H$ and covariance operator $Q\in L^{1}_{+}(H)$ (see, e.g.,  \cite[Ch.2, Sec.3]{DaPratoZabczyk02}).}
We will use it later to define the concept of a mild solution to \eqref{HJBbis}.
\bigskip

To construct a weak solution to a linear Fokker-Planck equation we will consider the SDE
\begin{equation}\label{MKV}
\d X_t=   A X_{t}\,\d t+ w(t,X_{t})\,\d t+\d W_{t}, \ \ \ \ \ \mathcal{L}(X_{0})=m_{0}.
\end{equation}
We will need the following assumption. 
\begin{assumption}\label{ass:w}
\begin{enumerate}[(i)]
\item
 $\displaystyle{\int_{H}|x|^4m_0(\d x)<\infty}$.\medskip
\item
$w\in C_b((0,T)\times H; H)$.
 \end{enumerate}
 \end{assumption}

  \begin{theorem}\label{th:mildSDE}
Let $m_{0}\in\mathcal{P}_{1}(H)$ and let Assumptions \ref{ass:A} and  \ref{ass:w}(ii) hold.  There exists a unique weak-mild solution to  \eqref{MKV} in the following sense.
\begin{enumerate}[(i)]
\item  There exist a filtered probability space $(\Omega, \mathcal{F}, (\mathcal{F}_{t})_{t\in[0,T]}, \P)$, an $\mathcal{F}_{0}-$measurable  random variable $X_{0}$ with $\mathcal{L}(X_{0})=m_{0}$,  a cylindrical Wiener process $(W_{t})_{t\in[0,T]}$ with respect to $(\mathcal{F}_{t})_{t\in[0,T]}$, and an adapted process $(X^{w}_{t})_{t\in[0,T]}$  such that
$$
X_{t}^{w}=e^{tA}X_{0}+\int_{0}^{t} e^{(t-s)A} w(s,X_{s}^{w})\d s + W^{A}_{t}, \ \ \ \forall t\geq 0;
$$
\item If another weak-mild solution $\widehat{X}^{w}$ exists, then $\mathcal{L}(X^{w}_{t})=\mathcal{L}(\widehat{X}^{w}_{t})$ for every $t\in[0,T]$.
\end{enumerate}
\end{theorem}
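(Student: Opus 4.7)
The natural approach is Girsanov's theorem, exploiting that $w$ is bounded so that the drift can be introduced on top of the Ornstein--Uhlenbeck dynamics via an equivalent change of measure. Under Assumption \ref{ass:A}, the O-U process $Z_t := e^{tA}X_0 + W_t^A$ is well defined on the reference filtered space of Section \ref{sub:prob} (with $X_0$ chosen independent of $W$ so that $\mathcal{L}(X_0)=m_{0}$), and Assumption \ref{ass:w}(ii) places us exactly in the regime where the SDE results of \cite{DFPR} apply; the plan is to adapt those results.

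For existence, since $s\mapsto w(s,Z_s)$ is a predictable, bounded $H$-valued process, the real-valued exponential
$$
\rho_t := \exp\!\Bigl(\int_0^t \langle w(s,Z_s),\mathrm{d}W_s\rangle - \tfrac{1}{2}\int_0^t |w(s,Z_s)|_H^2\,\mathrm{d}s\Bigr)
$$
is well defined, and Novikov's condition holds trivially because $\|w\|_\infty<\infty$; hence $\rho$ is a uniformly integrable martingale. Setting $\mathrm{d}\widetilde{\mathbb{P}} := \rho_T\,\mathrm{d}\mathbb{P}$, the infinite-dimensional Girsanov theorem (applicable to cylindrical Wiener processes on separable Hilbert spaces, see e.g.\ \cite{DaPratoZabczyk14,DFPR}) yields that
$$
\widetilde{W}_t := W_t - \int_0^t w(s,Z_s)\,\mathrm{d}s
$$
is a cylindrical Wiener process under $\widetilde{\mathbb{P}}$ with respect to $(\mathcal{F}_t)$. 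Substituting $W = \widetilde{W} + \int_0^\cdot w(s,Z_s)\,\mathrm{d}s$ into the mild representation of $Z$ gives
$$
Z_t = e^{tA}X_0 + \int_0^t e^{(t-s)A} w(s,Z_s)\,\mathrm{d}s + \int_0^t e^{(t-s)A}\,\mathrm{d}\widetilde{W}_s,
$$
so $(Z,\widetilde{W})$ on $(\Omega,\mathcal{F},(\mathcal{F}_t),\widetilde{\mathbb{P}})$ is a weak-mild solution to \eqref{MKV}, proving (i).

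For (ii), given any weak-mild solution $(\widehat{X}^w, \widehat{W})$ on a filtered space $(\widehat{\Omega},\widehat{\mathcal{F}},(\widehat{\mathcal{F}}_t),\widehat{\mathbb{P}})$, I would reverse the procedure. Because $w(s,\widehat{X}^w_s)$ is again bounded and predictable, the density
$$
\widehat{\rho}_T := \exp\!\Bigl(-\int_0^T \langle w(s,\widehat{X}^w_s),\mathrm{d}\widehat{W}_s\rangle - \tfrac{1}{2}\int_0^T |w(s,\widehat{X}^w_s)|_H^2\,\mathrm{d}s\Bigr)
$$
defines a new probability $\mathrm{d}\widehat{\mathbb{Q}} := \widehat{\rho}_T\,\mathrm{d}\widehat{\mathbb{P}}$ under which $W'_t := \widehat{W}_t + \int_0^t w(s,\widehat{X}^w_s)\,\mathrm{d}s$ is a cylindrical Wiener process and $\widehat{X}^w_t = e^{tA}X_0 + \int_0^t e^{(t-s)A}\,\mathrm{d}W'_s$. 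Hence the joint law of $(X_0,\widehat{X}^w,W')$ under $\widehat{\mathbb{Q}}$ is the canonical O-U law, which is uniquely determined by $m_0$; then the law of $\widehat{X}^w$ under $\widehat{\mathbb{P}}$ is recovered by the same Girsanov functional applied to this canonical law, so it coincides path-wise (in distribution) with the law of $Z$ under $\widetilde{\mathbb{P}}$ constructed in (i). In particular, $\mathcal{L}(\widehat{X}^w_t) = \mathcal{L}(X^w_t)$ for every $t\in[0,T]$.

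The main obstacle is purely a bookkeeping one: making the Girsanov step rigorous for cylindrical noise in infinite dimensions, and keeping careful track of the filtrations and of the fact that the shifted process $W'$ lives on the same filtration as $\widehat{W}$. Since $w$ is bounded and $H$-valued, $\int_0^\cdot w(s,Z_s)\,\mathrm{d}s$ is an absolutely continuous $H$-valued process, the stochastic integrals against cylindrical $W$ are well defined as real-valued martingales, and Novikov's condition is automatic; all the work is thus in identifying the Girsanov theorem in the precise form we need within \cite{DFPR,DaPratoZabczyk14} and verifying that their hypotheses are met by $w\in C_b((0,T)\times H;H)$, which should be essentially immediate.
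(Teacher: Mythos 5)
Your proposal is correct and is essentially the same argument as the paper's: the paper simply invokes \cite[Th.\,13]{DFPR}, whose content is precisely the Girsanov construction you spell out, and devotes its own proof to the bookkeeping of enlarging the probability space so that an $\mathcal{F}_{0}$-measurable $X_{0}$ with $\mathcal{L}(X_{0})=m_{0}$ exists alongside the Wiener process. The one detail you leave implicit and the paper checks explicitly is that the change of measure preserves the law of $X_{0}$ (via $\E[f(X_0)\E[M_T\,|\,\mathcal{F}_0]]=\E[f(X_0)]$, using $M_0\equiv 1$ and the $\mathcal{F}_0$-measurability of $X_0$); this is also needed in your uniqueness step when you identify the law of $(X_0,W')$ under $\widehat{\Q}$ as the canonical product law.
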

\begin{proof}
When $w$ does not depend on $t$, the result is proved \cite[Th.\,13]{DFPR} and the proof works in the same way if $w$ depends on $t$. We just need to explain how our setup fits into the framework of \cite{DFPR}.

Given our filtered probability space $(\Omega,\mathcal{F},(\mathcal{F}_{t})_{t\geq 0},\P)$ with the Wiener process $W$,
let $(\Omega_1,\mathcal{F}_1,\P_1)$ be another complete atomless probability space.
For any any $m_0\in \mathcal{P}_{1}(H)$, since $(\Omega_1,\mathcal{F}_1,\P_1)$ is nonatomic, there exists an $\mathcal{F}_1$-measurable random variable $\xi_{0}:\Omega_1\to X$ such that $m_0=\mathcal{L}(\xi_0)$ (see, e.g., \cite[Lemma\,5.29]{CarmonaDelarueBook1}).
We define a new probability space $(\Omega_2,\mathcal{F}_2,\P_2)$ as follows:
$$\Omega_2=\Omega\times\Omega_1, \ \  \P_2=\P\otimes\P_1 \ \mbox{on} \  \mathcal{F}\otimes \mathcal{F}_1,$$
$$\mathcal{F}_2= \mbox{completion of} \  \mathcal{F}\otimes \mathcal{F}_1\ \mbox{with respect to} \
\P_2=\P\otimes\P_1.$$ We then define $(\mathcal{F}^2_t)$ to be the augmentation of $\mathcal{F}_t\otimes \mathcal{F}_1$.
 The cylindrical Wiener process $\{W_{t}\}_{t\in[0,T]}$ and the random variable $\xi_{0}$ extend naturally to the space $(\Omega_2,\mathcal{F}_2,(\mathcal{F}^2_t)_{t\in[0,T]},\P_2)$. We denote them by $W^2$ and $X_0$ respectively. The random variable $X_{0}:\Omega\to H$ is $\mathcal{F}^2_0$-measurable, $m_0=\mathcal{L}(X_0)$, and $\{W^2_{t}\}_{t\in[0,T]}$ is a cylindrical Wiener process in the filtered probability space $(\Omega_2,\mathcal{F}_2,(\mathcal{F}^2_t)_{t\in[0,T]},\P_2)$.

Now, following the arguments of \cite{DFPR}, the weak-mild solution is constructed starting from the space $(\Omega_2,\mathcal{F}_2,(\mathcal{F}^2_t)_{t\in[0,T]},\P_2)$ using the Girsanov Theorem by introducing a new cylindrical Wiener process $\{\tilde W^2_{t}\}_{t\in[0,T]}$ and replacing $\P_2$ by $\tilde{\P}_2$, where
\[
\d\tilde{\P}_2={M_T} \d\P_2,
\]
and $(M_s)_{s\in[0,T]}$ is an appropriate martingale such that $M_{0}\equiv 1$. If $f\in C_b(H)$, we then have
\[
\tilde{\E}\left[f(X_0)\right]=\E\left[M_Tf(X_0)\right]=\E\left[\E\left[M_Tf(X_0)\,|\,\mathcal{F}^2_0\right]\right]=\E\left[f(X_0)\E\left[M_T\,|\,\mathcal{F}^2_0\right]\right]=\E\left[f(X_0)\right].
\]
Thus, the law of $X_0$ in the new probability space is the same.
This puts us in the framework of \cite{DFPR} and we can now proceed as in the proof of \cite[Th.\,13]{DFPR}.
\end{proof}

We provide some estimates for the laws $\mathcal{L}(X_{t}^{w})$ of the solution to \eqref{MKV} that will be used afterwards.
\begin{proposition}\label{lemma:DD}
Let Assumptions \ref{ass:A} and \ref{ass:w} hold and let $X^{w}_{t}$ be the unique weak-mild solution to \eqref{MKV}.
\begin{enumerate}[(i)]
\item There exists $c_{0}$ independent of {$\|w\|_{\infty}$} such that $$\displaystyle{\int_{H} |x|^{4} \mathcal{L}(X^{w}_{t})(\d x)\leq {c_{0}\left(1+\int_{H}|x|^4\,m_0(\d x)+\|w\|_{\infty}^4\right)}}, \ \ \ \forall t\in[0,T].$$
\medskip
\item For each $R>0$, there exists  a modulus of continuity $\omega_{m_{0},{R}}$ such that, if  $|w|_{\infty}\leq R$, it holds
$$\mathbf{d}_{1}(\mathcal{L}(X^{w}_{t}),
\mathcal{L}(X^{w}_{s}))\leq \omega_{m_{0},{R}}(|t-s|), \ \ \ \forall s, t\in[0,T].$$
\end{enumerate}
\end{proposition}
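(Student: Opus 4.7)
My strategy is to work in the probability space $(\Omega_2,\mathcal{F}_2,\tilde{\P}_2)$ constructed in the proof of Theorem \ref{th:mildSDE}, under which the mild representation
\[
X^w_t = e^{tA}X_0 + \int_0^t e^{(t-r)A}w(r,X^w_r)\,\d r + \tilde W^A_t, \qquad \tilde W^A_t := \int_0^t e^{(t-r)A}\,\d \tilde W_r,
\]
holds with $\tilde W$ a cylindrical Wiener process and $\mathcal{L}(X_0)=m_0$. All expectations below are taken under $\tilde{\P}_2$.

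For part (i), I would apply the triangle inequality in the mild formula and exploit that $\{e^{tA}\}$ is a contraction semigroup, obtaining the pointwise bound $|X^w_t|\le |X_0|+T\|w\|_\infty+|\tilde W^A_t|$. Raising to the fourth power with the convexity estimate $(a+b+c)^4\le 27(a^4+b^4+c^4)$ and taking expectations, the only nontrivial term is $\tilde{\E}[|\tilde W^A_t|^4]$: since under $\tilde{\P}_2$ the process $\tilde W^A_t$ is Gaussian in $H$ with trace-class covariance $Q_t\le Q_T$, standard Gaussian moment bounds give $\tilde{\E}[|\tilde W^A_t|^4]\le c\,\tr(Q_T)^2$, yielding (i) with a constant $c_0$ depending only on $T$ and $\tr(Q_T)$.

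For part (ii), I would use $\mathbf{d}_1(\mathcal{L}(X^w_t),\mathcal{L}(X^w_s))\le \tilde{\E}[|X^w_t-X^w_s|]$ and decompose the increment (with $h:=t-s$) into three pieces corresponding to initial datum, drift, and stochastic convolution. For the initial-datum piece $(e^{tA}-e^{sA})X_0=e^{sA}(e^{hA}-I)X_0$, contractivity of $e^{sA}$ reduces matters to bounding $\int_H|(e^{hA}-I)x|\,m_0(\d x)$; dominated convergence with envelope $2|x|\in L^1(m_0)$ produces a modulus $\omega_{m_0}(h)\to 0$ depending only on $m_0$. For the drift increment I would split it as $\int_s^t e^{(t-r)A}w(r,X^w_r)\,\d r+(e^{hA}-I)Z_s$ with $Z_s:=\int_0^s e^{(s-r)A}w(r,X^w_r)\,\d r$; the first summand is pathwise bounded by $Rh$, while for the second I would invoke the analyticity estimates $\|(-A)^\alpha e^{uA}\|\le C_\alpha u^{-\alpha}$ (yielding $|(-A)^\alpha Z_s|\le C'_\alpha RT^{1-\alpha}$) and the spectral inequality $|(e^{hA}-I)z|\le h^\alpha |(-A)^\alpha z|$ for $z\in D((-A)^\alpha)$, for any fixed $\alpha\in(0,1)$, giving a pathwise bound of order $Rh^\alpha$. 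For the stochastic-convolution piece I would use $\tilde W^A_t-\tilde W^A_s=\int_s^t e^{(t-r)A}\,\d\tilde W_r+(e^{hA}-I)\tilde W^A_s$ with independent summands, compute the second moment in the eigenbasis of $A$ to obtain
\[
\tilde{\E}\big[|\tilde W^A_t-\tilde W^A_s|^2\big]\le \tr(Q_h)+\sum_{k\in\mathbb{N}} \frac{(1-e^{-|\lambda_k|h})^2}{2|\lambda_k|},
\]
and note that both terms vanish as $h\to 0$ by dominated convergence against the summable envelope $1/|\lambda_k|$ (whose summability follows from \eqref{condo}); Cauchy--Schwarz then converts this into an $L^1$ modulus. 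Adding the three contributions produces the required modulus $\omega_{m_0,R}$.

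The main technical obstacle is controlling the drift residual $(e^{hA}-I)Z_s$: a naive estimate gives only $\le 2RT$, which does not vanish with $h$. The resolution relies on the smoothing properties of the analytic semigroup generated by the negative self-adjoint operator $A$, which upgrade $Z_s$ to an element of $D((-A)^\alpha)$ with a bound uniform in $s\in[0,T]$; the Hölder-type estimate for $e^{hA}-I$ on the fractional domain then supplies the desired $h^\alpha$-decay.
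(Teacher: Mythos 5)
Your proposal is correct. Part (i) and the treatment of the initial-datum and stochastic-convolution pieces in part (ii) follow essentially the same lines as the paper (mild formula, contractivity, Gaussian moment bounds for $W^A$, dominated convergence for $(e^{hA}-I)X_0$, and $L^2$-continuity of $t\mapsto W^A_t$ — which you make explicit by an eigenbasis computation with the summable envelope $1/|\lambda_k|$, legitimate since \eqref{condo} with $\lambda_k\to-\infty$ gives $\sum_k|\lambda_k|^{-1}<\infty$). The genuine difference is in the one delicate term, the drift residual $(e^{hA}-I)Z_s$ with $Z_s=\int_0^s e^{(s-r)A}w(r,X^w_r)\,\d r$. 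The paper handles it by a soft compactness argument: since $e^{\eta A}$ is compact, $\int_0^{s-\eta}e^{(s-r)A}w\,\d r$ lives in the compact set $e^{\eta A}(B_{TR})$, on which $e^{hA}-I$ converges uniformly to $0$; the tail contributes $2\eta R$, and an infimum over $\eta$ assembles a modulus $\tilde\omega_{A,R}$. You instead use the analyticity of the self-adjoint semigroup: $\|(-A)^\alpha e^{uA}\|\le C_\alpha u^{-\alpha}$ gives $|(-A)^\alpha Z_s|\le C'_\alpha R\,T^{1-\alpha}$ (by closedness of $(-A)^\alpha$ and convergence of the singular integral), and the spectral inequality $1-e^{-x}\le x^\alpha$ yields $|(e^{hA}-I)Z_s|\le C'_\alpha R\,T^{1-\alpha}h^\alpha$ pathwise. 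Both routes are available under Assumption \ref{ass:A}; yours buys an explicit H\"older-$\alpha$ modulus for this term (more quantitative), while the paper's relies only on compactness of $e^{tA}$, $t>0$, and would survive without self-adjointness/analyticity. No gaps.
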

\begin{proof}
\begin{enumerate}[(i)]
\item
Using the definition of mild solution, the fact that $|e^{tA}|\leq 1$ and the estimate on the stochastic convolution $W_t^A$  \cite[Prop.\,1.144]{FGS}, we have, for some $c_{1}>0$,
\begin{eqnarray*}
\E[|X^{w}_{t}|^4]\le &
\E\left[
\left|e^{tA}X_{0}+\int_{0}^{t} e^{(t-s)A} w(s,X_{s}^{w})\d s + W^{A}_{t}\right|^4\right]\\
&\le
c_1(\E[|X_{0}|^{4}]+T^4 \|w\|_{\infty}^{4} +{1}), \ \ \ \forall \,0\leq t\leq T,
\end{eqnarray*}
which implies (i).
\item Let  $0\leq s\leq t\leq T$. Using the definition of mild solution, we have the existence of some $c_{1}>0$, independent of $s,t$  such that
\begin{align*}
&\mathbf{d}_{1}(\mathcal{L}(X^{w}_{t}),\mathcal{L}(X^{w}_{s})) \leq \mathsf{E}\left[|X^{w}_{t}-X^{w}_{s}|\right]
\le c_{1}\Bigg( \E\left[ \left|(e^{(t-s) A}-I)X_{0}\right|\right]\\&+ \E\left[\left|(e^{(t-s)A}-I) \int_{0}^{s} e^{(s-r)A} w(r,X_{r}^{w})\d {r}\right|+\int_{s}^{t} \left|e^{(t-r)A}w(r,X_{r}^{w})\right|\d {r}\right]\\&+  \E\left[|W^{A}_{t}-W^{A}_{s}|\right]\Bigg).
\end{align*}
We proceed to estimate the terms in the right hand side.
\medskip
\begin{enumerate}
\item Recalling that $|e^{\alpha A}|\leq 1$ for $\alpha\geq 0$, we have, for every $0\leq s\leq t\leq T$,
 \begin{equation*}\label{lala}
\E\left[\int_{s}^{t} \left|e^{(t-r)A}w(r,X_{r}^{w})\right|\d {r}\right]\leq (t-s) \|w\|_{\infty}.
\end{equation*}
\item Considering that $(e^{tA})_{t\in[0,T]}$ is strongly continuous and using dominated convergence, we have
$$
\E\left[ \left|(e^{(t-s) A}-I)X_{0}\right|\right]\to 0, \ \ \mbox{as} \ s\to t.
$$
Hence, there exists a modulus of continuity $\omega_{m_{0},A}$, independent of $w$ such that, for every $0\leq s\leq t\leq T$,
$$
\E\left[ \left|(e^{(t-s) A}-I)X_{0}\right|\right]\leq \omega_{m_{0},A}(|t-s|).
$$
\item
%

Let $R>0$. Since $A$ generates a semigroup of contractions, for every $0<\eta<s\leq T$ and $w$ such that $|w|_{\infty}\leq R$, we have ($\P\mbox{-a.s.}$)
$$\int_0^{s-\eta}e^{(s-r)A}w(r,X_{r}^{w})\d r\ \in \  e^{\eta A}(B_{TR}) =:K_{\eta,R}.$$
Assumption \ref{ass:A} guarantees that the semigroup is compact so $K_{\eta,R}$ is compact. 
 Considering also that the semigroup is strongly continuous, we have for $0\leq s\leq t\leq T$
 \begin{equation}\label{compact}
 \sup_{x\in K_{\eta,R}} |(e^{(t-s)A}-I)x|\leq \omega_{\eta,A,R}(t-s),
 \end{equation}
for some modulus of continuity  $\omega_{\eta,A,R}$. Using \eqref{compact}, it follows that,  for every $0\leq \eta \le s\leq t\leq T$
\begin{align*}&
\E \left[\left|(e^{(t-s)A}-I)\int_0^s e^{(s-r)A}w(r,X_r^{w})\d r\right|\right]\\&\leq \E\left[ \left|(e^{(t-s)A}-I)\int_0^{s-\eta} e^{(s-r)A}w(r,X_r^{w})\d r\right|+  \left|(e^{(t-s)A}-I)\int_{s-\eta}^s e^{(s-r)A}w(r,X_r^{w})\d r\right|\right]\\&\leq \omega_{\eta,A,R}(t-s)+2\eta R.
 \end{align*}
 Now, define
 $$
 \tilde\omega_{A,R}(r):=\inf_{0<\eta\leq T} \Big\{\omega_{\eta,A,R}(r)+ 2\eta R\Big\}, \ \ \ r>0.
 $$
 Then, $\tilde{\omega}_{A,R}(0^+)=0$ and $\tilde\omega_{A,R}$ is concave; so, it is a modulus of continuity. Taking now the infimum over $0<\eta\leq T$ in the inequality above, we get, for every $0\leq s\leq t\leq T$,
 $$\E\left[ \left|(e^{(t-s)A}-I)\int_0^s e^{(s-r)A}w(r,X_r^{w})\d r\right|\right] \leq \tilde{\omega}_{A,R}(|t-s|).$$
%
\item The map $[0,T]\to L^{2}(\Omega,\P)$, $t\mapsto W^{A}_{t}$ is continuous (see \cite[Th.\,5.2]{DaPratoZabczyk14}); hence, also the map   $[0,T]\to L^{1}(\Omega,\P)$,  $t\mapsto W^{A}_{t}$ is continuous.
Hence, since $[0,T]$ is compact, there exists a modulus of continuity $\hat\omega_{A}$ such that, for every $0\leq s\leq t\leq T$,
$$
\E\left[|W^{A}_{t}-W^{A}_{s}|\right]\leq \hat \omega_{A}(|t-s|).
$$
\end{enumerate}
We now combine all the estimates of points (a)--(d) above to obtain Claim (ii).
\end{enumerate}
\end{proof}

\subsection{Assumptions on $\mathcal{H}$ and $G$}
We make the following assumptions about $\mathcal{H}$ and $G$.
 \begin{assumption}
 \label{ass:HJB}
 \begin{enumerate}[(i)]
 \item[]
 \item
{The function $\mathcal{H}$ is continuous, the function $\mathcal{H}(\cdot,0,\mu_{0})$ is bounded for some $\mu_{0}\in\mathcal{P}_{1}(H)$, and there exists $C>0$ such that}
$$
|\mathcal{H}(x,p,\mu)-\mathcal{H}(x,p',\mu')|\leq C(|p-p'|+\mathbf{d}_{1}(\mu,\mu')), \ \ \ \  \forall x\in H, \ p,p'\in H, \ \mu,\mu'\in\mathcal{P}_1(H).
$$
\item $\mathcal{H}$ is differentiable with respect to the second variable, {$\mathcal{H}_{p}$ is continuous and bounded}, and there exists $C>0$ such that
$$
|\mathcal{H}_{p}(x,p,\mu)-\mathcal{H}_{p}(x,p',\mu')|\leq C(|p-p'|+\mathbf{d}_{1}(\mu,\mu')), \ \ \ \ \forall x\in H, \ p,p'\in H, \ \mu,\mu'\in\mathcal{P}_1(H).
$$
\item
{$G$ is continuous and such that the function $\mu\mapsto G(\cdot,\mu)$ is continuous as a map from $\mathcal{P}_{1}(H)$ to $C_b(H)$.}
\end{enumerate}
\end{assumption}
\begin{remark}
Requiring Lipschitz continuity of $\mathcal{H}$ and $\mathcal{H}_{p}$ with respect to $p$ and $\mu$ variables is a little restrictive. However, Assumption \ref{ass:HJB} still covers many basic cases, especially those having separated structure. An example of $\mathcal{H}$ coming from a stochastic optimal control problem is provided in Section \ref{sec:example}.
\end{remark}

\section{The Fokker-Planck Equation: Weak Solutions}\label{sec:FP}
In this section, we study \eqref{FPbis}  when the term $\mathcal{H}_p(x,Dv(t,x),m(t))$ is frozen. So, given $w\in C_b((0,T)\times H;H$), we 
consider the linear Fokker-Planck equation in the space $\mathcal{P}_{1}(H)$
\begin{equation}\label{FP}
\begin{cases}
\partial_{t}m(t) - {L}^{*} m(t)-\mbox{div}(w(t,x)m(t))=0,\\
m(0)=m_{0}\in\mathcal{P}_1(H).
\end{cases}
\end{equation}
We will interpret \eqref{FP} in a weak sense,  {following the approach of \cite{BDPR}}. For that, 
{we define the set of test functions}
\begin{align}
\label{eq:DTdef}
\mathcal{D}_T=\Big\{&\varphi\in  C_b^{1,2}([0,T]\times H): \  A^{*}D\varphi(t,x) \in C_{b}([0,T]\times H;H),\nonumber\\
 &  \ D^{2}\varphi\in C_{b}([0,T]\times H;\mathcal{L}_{1}(H))\Big\}.
\end{align}
To provide a rigorous definition of solution to \eqref{FP}, we introduce the operator
\begin{equation}\label{a0}
(L_0 \phi)(x)= \langle x , A^{*}D\phi(x)\rangle + \frac{1}{2}\mbox{Tr}\,[D^2 \phi(x)].
\end{equation}
We notice that  $L_{0}\varphi(t,x):=[L_{0}\varphi(t,\cdot)](x)$ is well defined  for every $\varphi\in \mathcal{D}_{T}$ and $t\in[0,T]$.
The formal operators $L^*$ and ${\mathbf{div}}$ in \eqref{FP} are interpreted similarly as for the Fokker-Planck equation in the case $H=\R^{n}$ when we apply test functions in $\mathcal{D}_T$. The definition of a \emph{weak solution} to \eqref{FP} given below is {similar to} the one of \cite{BDPR}; however, we use a larger class of test functions {and we assume from the beginning that the solution is continuous in time.}
\begin{definition}\label{def:solFP}
A map {$m\in \mathcal{S}$}
is called a \emph{weak solution} to \eqref{FP} if $m(0)=m_{0}$ and
\begin{align}\label{weakFP}
&
\int_{H} \varphi (t,x) m(t,\d x) - \int_{H} \varphi (0,x) m(0,\d x)\\&
= \int_0^{t} \left(\int_{H}\big[\partial_{t}\varphi(s,x)+L_{0}\varphi(s,x)-\langle w(s,x), D\varphi(s,x)\rangle \big]m(s,\d x)\right)\d s\nonumber
\end{align}
for every $t\in[0,T]$  and $\varphi\in \mathcal{D}_T$.\footnote{{Instead of the requirement that $m\in \mathcal{S}$, in \cite{BDPR}  the notion of weak solution $m:[0,T]\to\mathcal{P}_1(H)$ requires that  $t\mapsto m(t)(I)$ is measurable for each $I\in\mathcal{B}(H)$; that is, in the terminology of \cite{BDPR},  $m$ is a probability kernel. Our requirement that $m\in\mathcal{S}$ is actually stronger. Indeed, given $I\in\mathcal{B}(H)$ and $m\in\mathcal{S}$, the map $t\mapsto m(t,I)$ is the composition of the continuous map $[0,T] \to (\mathcal{P}_1(H),\mathbf{d}_1), \ t\mapsto m(t)$ and of the map $(\mathcal{P}_1(H),\mathbf{d}_1)\to\R, \ \mu\mapsto \mu(I)$.
 The $\mathbf{d}_1$-topology is stronger than the weak topology of measures; so, to conclude that $m\in\mathcal{S}$ is a probability kernel, it suffices to show that $\mu\mapsto \mu(I)$ is measurable when $\mathcal{P}_{1}(H)$ is endowed with the Borel $\sigma$-algebra induced by the weak topology. This claim follows from the fact that the latter map is upper semicontinuous
for $I$ closed (that is standard, as a consequence of one of the equivalent definitions of weak convergence) and by \cite[Prop.\,7.25]{Bertsekas}. This makes our notion of solution stronger and enables us to invoke the uniqueness result of \cite{BDPR} in Proposition \ref{lemma:DF}.}} 

\end{definition}

The connection between \eqref{FP} and \eqref{MKV} is provided by the following.
\begin{proposition}\label{prp:w}
Let Assumptions \ref{ass:A} and \ref{ass:w}{(ii)} hold and let
 $X^{w}$ be the unique  weak-mild solution to \eqref{MKV} provided by Theorem \ref{th:mildSDE}. Then,
$\mathcal{L}(X^{w}_{\cdot})$ is a weak solution to \eqref{MKV}.
\end{proposition}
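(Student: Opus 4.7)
The plan is to apply an Itô-type formula to the composition $\varphi(t,X_t^w)$ for each test function $\varphi\in\mathcal{D}_T$, take expectations, and read off the resulting identity as \eqref{weakFP}. Because $X^w$ is only a weak-mild solution---no stochastic differential in $H$ is available---the classical Itô formula does not apply directly; one invokes instead the generalized Itô formula for mild solutions in Hilbert space, whose standard hypotheses are that $D\varphi(t,\cdot)$ takes values in $D(A^*)$ with $A^*D\varphi\in C_b$ and that $D^2\varphi$ is trace-class valued in a bounded continuous way. The class $\mathcal{D}_T$ has been tailored precisely to these requirements, and the relevant statement is the one collected in Appendix \ref{app:B} (drawn from \cite[Ch.\,4]{FGS}).

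Applying that formula to $X^w$ gives, for each $\varphi\in\mathcal{D}_T$ and each $t\in[0,T]$,
\[
\varphi(t,X_t^w)-\varphi(0,X_0)=\int_0^t\!\bigl(\partial_s\varphi(s,X_s^w)+L_0\varphi(s,X_s^w)+\langle w(s,X_s^w),D\varphi(s,X_s^w)\rangle\bigr)\,\mathrm{d}s \;+\; M_t,
\]
where $M_t=\int_0^t\langle D\varphi(s,X_s^w),\mathrm{d}W_s\rangle$ is a mean-zero square-integrable martingale (its integrand is bounded because $D\varphi\in C_b$). Taking expectations kills $M_t$, and the identity $\mathcal{L}(X_s^w)=m(s)$ together with a Fubini exchange---legitimate in view of the uniform bounds supplied by $\varphi\in C^{1,2}_b$, $A^*D\varphi\in C_b$, $D^2\varphi\in C_b([0,T]\times H;\mathcal{L}_1(H))$ and $w\in C_b$---yields
\[
\int_H\varphi(t,x)\,m(t,\mathrm{d}x)-\int_H\varphi(0,x)\,m_0(\mathrm{d}x)=\int_0^t\!\int_H\bigl(\partial_s\varphi+L_0\varphi+\langle w,D\varphi\rangle\bigr)m(s,\mathrm{d}x)\,\mathrm{d}s,
\]
which is the weak identity \eqref{weakFP} with the sign of the drift term inherited from the drift convention of \eqref{MKV}.

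To complete the verification of Definition \ref{def:solFP} it remains to check that $t\mapsto m(t)(I)$ is Borel measurable for every $I\in\mathcal{B}(H)$. By Proposition \ref{lemma:DD}(ii), $t\mapsto m(t)$ is continuous in $(\mathcal{P}_1(H),\mathbf{d}_1)$, hence weakly continuous; consequently $t\mapsto\int_H f\,\mathrm{d}m(t)$ is continuous for every $f\in C_b(H)$, and a standard monotone-class argument extends measurability to indicators of Borel sets. The one genuinely delicate ingredient is thus the mild Itô formula in Hilbert space: its hypotheses are encoded into the very definition of $\mathcal{D}_T$, and once this is granted the rest of the argument is a routine manipulation of the expectation and a Fubini exchange.
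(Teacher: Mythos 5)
Your argument is exactly the paper's: the authors prove this proposition in one line by invoking Dynkin's formula for regular functions (citing \cite[Prop.\,1.164]{FGS} with $f_s=w(s,X^w_s)$), which is precisely the mild It\^o/Dynkin computation you carry out in detail, and your additional check of the measurability requirement via Proposition \ref{lemma:DD}(ii) is correct. The only point worth noting is the sign of the drift term: Dynkin's formula applied to \eqref{MKV} yields $+\langle w,D\varphi\rangle$ while \eqref{weakFP} is written with $-\langle w,D\varphi\rangle$, a discrepancy already present between the paper's own equations \eqref{MKV} and \eqref{weakFP} rather than a defect of your proof.
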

\begin{proof}
 This is just an application of Dynkin's formula for sufficiently regular functions: see, e.g., \cite[Prop.\,1.169]{FGS}, {applied there with 
 $$b_0(s,x,a_1)=a_1, \ a_1(s,\omega)=w(s,X^w_s(\omega)), \ a_2(\cdot)=0, \ \Lambda=B_{\|w\|_\infty}\subset H.
 \vspace{-.9cm}$$}
\end{proof}
Concerning  uniqueness of solutions to \eqref{FP}, we have the following.

\begin{proposition}\label{lemma:DF}
Let Assumptions \ref{ass:A} and  \ref{ass:w} hold. Then \eqref{FP} admits at most one weak solution in the class
 $$
\mathcal{S}_{4}:=\left\{m\in \mathcal{S}: \ {\int_0^T}\int_{H}|x|^{4}m(t,\d x){\d t}<\infty\right\}.
 $$
\end{proposition}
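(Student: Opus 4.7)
The plan is to reduce uniqueness to the corresponding uniqueness result in \cite{BDPR}, exactly as flagged in the introduction. The main task is therefore to check that any weak solution to \eqref{FP} in the sense of Definition \ref{def:solFP}, lying in the class $\mathcal{S}_4$, is also a weak solution in the sense used in \cite{BDPR}, so that the uniqueness theorem there applies directly. Because our test function class $\mathcal{D}_T$ is (in principle) larger than the one adopted in \cite{BDPR}, our weak formulation is a priori more restrictive, which is exactly what is needed for this reduction.

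First I would compare the two test-function classes. The standard test functions in \cite{BDPR} are smooth cylindrical functions built from a basis diagonalizing $A$, namely $\varphi(t,x)=\tilde\varphi(t,\langle x,\mathbf{e}_{k_1}\rangle,\dots,\langle x,\mathbf{e}_{k_N}\rangle)$ with $\tilde\varphi$ smooth, bounded together with its derivatives. For such $\varphi$ one has $A^*D\varphi=\sum_{j=1}^{N}\lambda_{k_j}(\partial_{j}\tilde\varphi)\,\mathbf{e}_{k_j}$, which belongs to $C_b([0,T]\times H;H)$, and $D^{2}\varphi$ has finite rank, hence is automatically trace-class with uniformly bounded trace norm. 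Thus every such cylindrical $\varphi$ lies in $\mathcal{D}_T$, and so a weak solution in our sense is in particular a weak solution in the sense of \cite{BDPR}.

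Next I would verify that the quantitative hypotheses of the uniqueness statement in \cite{BDPR} are met in our setting. Under Assumption \ref{ass:w}(ii) the drift $w$ is bounded and continuous on $(0,T)\times H$, which is more than sufficient. The class $\mathcal{S}_4$ imposes the integrability $\int_0^T\!\!\int_H |x|^4\, m(t,\d x)\,\d t<\infty$, which is (well) beyond the first- or second-moment bound typically required to make sense of the linear term $\langle x, A^{*}D\varphi\rangle$ in \eqref{weakFP}. Finally, the measurability of $t\mapsto m(t)(I)$ for every Borel set $I$, built into Definition \ref{def:solFP} and automatic from $\rho_{\infty}$-continuity, matches the probability-kernel structure used in \cite{BDPR}.

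Once these identifications are in place, applying the uniqueness theorem of \cite{BDPR} to the two candidate solutions $m_1,m_2\in\mathcal{S}_4$ with $m_1(0)=m_2(0)=m_0$ yields $m_1\equiv m_2$ on $[0,T]$. The main obstacle is mostly bookkeeping: carefully matching the notion of weak solution used in \cite{BDPR} with Definition \ref{def:solFP}, checking that the cylindrical test functions of \cite{BDPR} indeed belong to $\mathcal{D}_T$, and verifying that the hypotheses on the coefficient $A$ and the drift $w$ imposed in \cite{BDPR} are implied by Assumptions \ref{ass:A} and \ref{ass:w} together with the moment bound defining $\mathcal{S}_4$.
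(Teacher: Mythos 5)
Your proposal is correct and follows essentially the same route as the paper: reduce to the uniqueness theorem of \cite{BDPR} by checking that its (smaller) test-function class is contained in $\mathcal{D}_T$, so that a weak solution in the sense of Definition \ref{def:solFP} lying in $\mathcal{S}_4$ is a fortiori a solution in the sense of \cite{BDPR}. The only imprecision is the description of that class: \cite{BDPR} uses the linear span of real parts of $\phi(t)e^{\mathrm{i}\langle x,h(t)\rangle}$ with $\phi\in C^{1}([0,T])$ and $h\in C^{1}([0,T],D(A^{*}))$, rather than general cylindrical functions over the eigenbasis, but your containment check (boundedness and continuity of $A^{*}D\varphi$, finite rank hence trace-class $D^{2}\varphi$) applies verbatim to these exponentials.
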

\begin{proof}
The claim follows from
\cite[Th.\,4.1]{BDPR} once we prove that a solution in our sense is also a solution in the sense of \cite[Th.\,4.1]{BDPR}.
To do that we first notice that, in
\cite[Formula (1.5)]{BDPR}, the definition of solution is given using the set test functions (called $\mathcal{E}_{A}(H)$ there)
which is the linear span of the real parts of the functions of the form
$$(t,x)\mapsto \phi(t)e^{\mathrm{i}\langle x,h(t)\rangle}, \ \ \ \phi\in C^{1}([0,T]), \ \ h\in C^{1}([0,T],D(A^{*})).$$
Given the above, we observe that such a set is clearly contained in the set $\mathcal{D}_{T}$ defined in \eqref{eq:DTdef}. Finally, we point out that in \cite{BDPR} the Kolmogorov operator $L$ is defined differently (corresponding to our $\partial_{t}+L_{0}-\langle w,D\rangle$); still, equality (1.5) in \cite{BDPR}  corresponds to our  \eqref{weakFP}.
\end{proof}

We combine and summarize the above two results in the following theorem.
\begin{theorem}
Let {Assumptions \ref{ass:A} and \ref{ass:w} hold.} Then \eqref{FP} admits a unique weak solution in the class $\mathcal{S}_{4}$, which coincides with the law of the unique weak-mild solution $X^{w}$ to \eqref{MKV}.
\end{theorem}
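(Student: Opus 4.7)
The plan is to combine the four results already established in this section into a single statement. The proof essentially has no new content beyond gluing together Theorem \ref{th:mildSDE}, Proposition \ref{prp:w}, Proposition \ref{lemma:DD}, and Proposition \ref{lemma:DF}; the only genuine point to verify is that the law of $X^w$ produced by Theorem \ref{th:mildSDE} actually lives in the class $\mathcal{S}_4$ where uniqueness holds.

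First I would invoke Theorem \ref{th:mildSDE}, which under Assumption \ref{ass:A} and Assumption \ref{ass:w}(ii) yields a unique (in law) weak-mild solution $X^w$ to \eqref{MKV}. Then Proposition \ref{prp:w} gives that the curve of marginals $t\mapsto \mathcal{L}(X^w_t)$ is a weak solution of \eqref{FP} in the sense of Definition \ref{def:solFP}. So existence of a weak solution is immediate; the task reduces to showing this curve lies in $\mathcal{S}_4$ and then invoking uniqueness.

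To show $\mathcal{L}(X^w_\cdot) \in \mathcal{S}_4$, I would use Proposition \ref{lemma:DD}: part (i), which requires the full Assumption \ref{ass:w} (in particular the fourth moment bound on $m_0$), gives $\sup_{t\in[0,T]}\int_H |x|^4 \mathcal{L}(X^w_t)(\d x) < \infty$, and integrating in $t\in[0,T]$ delivers the integrability condition that defines $\mathcal{S}_4$. Part (ii) of the same proposition (applied with $R=\|w\|_\infty$, finite by Assumption \ref{ass:w}(ii)) yields a modulus of continuity for $t\mapsto \mathcal{L}(X^w_t)$ in the Monge--Kantorovich distance $\mathbf{d}_1$, so this map belongs to $\mathcal{S}=C([0,T];(\mathcal{P}_1(H),\mathbf{d}_1))$. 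Combined, $\mathcal{L}(X^w_\cdot)\in\mathcal{S}_4$.

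Finally, for uniqueness, Proposition \ref{lemma:DF} states that under Assumptions \ref{ass:A} and \ref{ass:w} the equation \eqref{FP} admits at most one weak solution in $\mathcal{S}_4$. Hence any weak solution in $\mathcal{S}_4$ must coincide with $\mathcal{L}(X^w_\cdot)$, proving the theorem. There is no real obstacle here: the whole argument is a packaging of previously proved facts, and the only thing to be careful about is to check that the hypotheses of each of the four cited results are exactly those imposed in the theorem statement, namely Assumption \ref{ass:A} together with Assumption \ref{ass:w} in full (the fourth moment bound on $m_0$ is needed only for the $\mathcal{S}_4$-membership step).
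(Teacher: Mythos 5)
Your proposal is correct and follows essentially the same route as the paper: existence via Theorem \ref{th:mildSDE} and Proposition \ref{prp:w}, membership of $\mathcal{L}(X^w_\cdot)$ in $\mathcal{S}_4$ via Proposition \ref{lemma:DD}, and uniqueness via Proposition \ref{lemma:DF}. You are in fact slightly more careful than the paper's one-line proof, which cites only Proposition \ref{lemma:DD}(ii) for the $\mathcal{S}_4$-membership, whereas part (i) is what supplies the fourth-moment bound.
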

\begin{proof}
By Proposition \ref{lemma:DD}(i), our solution provided by Proposition \ref{prp:w} belongs to the class $\mathcal{S}_{4}$ and the uniqueness follows from Proposition \ref{lemma:DF}.
\end{proof}

\section{The HJB Equation: Mild Solutions}\label{sec:HJB}
In this section we deal with equation \eqref{HJBbis} for a given fixed $m\in\mathcal{S}$.
We introduce the concept of a \emph{mild solution} to \eqref{HJBbis} (see \cite[Section 4.4.1.2]{FGS}).
\begin{definition}[Mild solution to \eqref{HJBbis}]\label{def:mildHJB} Recall \eqref{eqB:Cmgammadefbis}.
A \emph{mild solution} to \eqref{HJBbis} is a function $v\in C_{b,\gamma}^{0,1}([0,T]\times H)$ such that, for all $(t,x)\in [0,T]\times H$,
$$
v(t,x)=[R_{T-t}\,G(\cdot,m(T))](x)-\int_{t}^T [R_{s-t}\mathcal{H}(\cdot,Dv(s,\cdot), m(s))](x)\d s.
$$
\end{definition}

\begin{theorem}\label{Th:HJB}
Let  $m\in \mathcal{S}$ and let Assumptions \ref{ass:A} and  \ref{ass:HJB} hold.
\begin{enumerate}[(i)]
\item  There exists a unique mild solution $u$ to \eqref{HJBbis}.
 \item If $m_{n}\to m$ in $\mathcal{S}$, denoting by $v_{n}$ and $v$ the corresponding mild solutions to \eqref{HJBbis}, we have   $$\|Dv_{n} -Dv\|_{C_{b,\gamma}}\to 0.$$
\end{enumerate}
\end{theorem}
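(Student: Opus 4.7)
The plan is to prove existence and uniqueness in part (i) by a standard contraction-mapping argument in the weighted space $C_{b,\gamma}^{0,1}([0,T]\times H)$, exploiting the smoothing property of the Ornstein--Uhlenbeck semigroup $R_t$ guaranteed by Assumption \ref{ass:A} through Remark \ref{rem:assA}(ii)(b). Specifically, the Bismut--Elworthy--Li type formula based on the inclusion $e^{tA}(H)\subset Q_t^{1/2}(H)$ and the bound $|Q_t^{-1/2}e^{tA}|\leq C_0 t^{-\gamma}$ yields that, for every $f\in C_b(H)$ and $t>0$, the function $R_t f$ is Fr\'echet differentiable in $x$ with $|D(R_t f)(x)|\leq C_0 t^{-\gamma}\|f\|_\infty$. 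This is exactly the regularizing ingredient that the weight $(T-t)^{\gamma}$ in the definition of $C_{b,\gamma}^{0,1}$ was designed to absorb.

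Define the operator
\[
\Gamma v(t,x) := [R_{T-t}G(\cdot,m(T))](x) - \int_t^T [R_{s-t}\,\mathcal{H}(\cdot,Dv(s,\cdot),m(s))](x)\,\d s.
\]
By Assumption \ref{ass:HJB}(iii), $G(\cdot,m(T))\in C_b(H)$, and by Assumption \ref{ass:HJB}(i), for $v\in C_{b,\gamma}^{0,1}$ one has
\[
\|\mathcal{H}(\cdot,Dv(s,\cdot),m(s))\|_\infty \leq C' + C\|Dv\|_{C_{b,\gamma}}(T-s)^{-\gamma},
\]
which is integrable on $[t,T]$ since $\gamma<1$, so $\Gamma v\in C_b([0,T]\times H)$. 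Applying the smoothing estimate and multiplying by $(T-t)^\gamma$,
\[
(T-t)^{\gamma}|D(\Gamma v)(t,x)|\leq C_0\|G(\cdot,m(T))\|_\infty+C_0(T-t)^{\gamma}\int_t^T(s-t)^{-\gamma}\|\mathcal{H}(\cdot,Dv(s,\cdot),m(s))\|_\infty\,\d s,
\]
and the integral $\int_t^T(s-t)^{-\gamma}(T-s)^{-\gamma}\d s=B(1-\gamma,1-\gamma)(T-t)^{1-2\gamma}$ shows $\Gamma v\in C_{b,\gamma}^{0,1}$. The same computation, together with the Lipschitz bound
\[
\|\mathcal{H}(\cdot,Dv_1(s,\cdot),m(s))-\mathcal{H}(\cdot,Dv_2(s,\cdot),m(s))\|_\infty\leq C(T-s)^{-\gamma}\|Dv_1-Dv_2\|_{C_{b,\gamma}},
\]
gives contractivity after introducing an equivalent weighted norm of the form $\|v\|_\lambda=\sup_{t\in[0,T)}e^{-\lambda(T-t)}\bigl[\|v(t,\cdot)\|_\infty+(T-t)^\gamma\|Dv(t,\cdot)\|_\infty\bigr]$ for $\lambda$ sufficiently large (or, equivalently, by iterating on a short terminal interval and patching). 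Banach's fixed point theorem then produces the unique mild solution $v$, proving (i).

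For (ii), subtracting the mild formulations for $v_n$ and $v$ gives
\[
v_n(t,x)-v(t,x)=[R_{T-t}(G(\cdot,m_n(T))-G(\cdot,m(T)))](x)-\int_t^T R_{s-t}\bigl[\mathcal{H}(\cdot,Dv_n(s,\cdot),m_n(s))-\mathcal{H}(\cdot,Dv(s,\cdot),m(s))\bigr](x)\,\d s.
\]
Applying $D$, using the smoothing estimate, Assumption \ref{ass:HJB}(i) which yields
\[
\|\mathcal{H}(\cdot,Dv_n(s,\cdot),m_n(s))-\mathcal{H}(\cdot,Dv(s,\cdot),m(s))\|_\infty\leq C\bigl(\|Dv_n(s,\cdot)-Dv(s,\cdot)\|_\infty+\mathbf{d}_1(m_n(s),m(s))\bigr),
\]
and Assumption \ref{ass:HJB}(iii) (giving $\|G(\cdot,m_n(T))-G(\cdot,m(T))\|_\infty\to 0$), multiplying by $(T-t)^\gamma$ and bounding by the same beta-function argument, one obtains an inequality of Gronwall type in the weighted seminorm $\varphi_n(t):=\sup_{\tau\in[t,T)}(T-\tau)^\gamma\|Dv_n(\tau,\cdot)-Dv(\tau,\cdot)\|_\infty$, with a forcing term that tends to $0$ uniformly in $t$ because of the uniform convergence of $G$-terms and because $\rho_\infty(m_n,m)\to 0$. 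A Gronwall-type (or singular Gronwall) lemma then yields $\|Dv_n-Dv\|_{C_{b,\gamma}}\to 0$.

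The main obstacle is the interplay between the singular weight at $t=T$ in the gradient estimate and the convolution structure: the fixed point does not close on the original norm by a naive contraction on $[0,T]$, and one must either rescale by $e^{-\lambda(T-t)}$, iterate near $T$, or work on small backward subintervals, carefully tracking the beta-type integral $\int_t^T(s-t)^{-\gamma}(T-s)^{-\gamma}\d s$ so that constants remain independent of the particular $m\in\mathcal{S}$ (this uniformity is essential for part (ii)).
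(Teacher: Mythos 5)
Your proposal is correct and follows essentially the same route as the paper: part (i) is the standard contraction argument in the weighted space using the smoothing estimate $|D(R_tf)|\leq C_0t^{-\gamma}\|f\|_\infty$ (the paper simply delegates this to \cite[Theorem 4.90]{FGS}), and part (ii) is exactly the paper's argument of differentiating the mild formulations, invoking the smoothing estimate and the Lipschitz bounds from Assumption \ref{ass:HJB}, and closing with a singular Gronwall lemma. The only cosmetic difference is that the paper applies the generalized Gronwall inequality \cite[Prop.\,D.30]{FGS} directly to $\|Dv(t,\cdot)-Dv_n(t,\cdot)\|_\infty$ with the singular kernel $(s-t)^{-\gamma}$, rather than to your running supremum $\varphi_n(t)$.
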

\begin{proof}
\begin{enumerate}[(i)]
\item
This part is contained in \cite[Theorem 4.90]{FGS}.\footnote{More precisely, it is enough to take in that theorem, $m=0$, $\gamma_G(t)=t^{-\gamma}$, $G=\mathbf{Id}$. We warn that $G$ in this footnote refers to the operator used in \cite[Theorem 4.90]{FGS}, which has nothing to do with our function $G$.}
\item Using the smoothing properties of the Ornstein-Uhlenbeck semigroup (see e.g.
    \cite[Theorem 4.37]{FGS}), we have the representation of the gradients
$$
Dv(t,x)=D[R_{T-t}\,G(\cdot,m(T))](x)-\int_{t}^T D[R_{s-t}\mathcal{H}(\cdot,Dv(s,\cdot), m(s))](x)\d s.
$$
$$
Dv_{n}(t,x)=D[R_{T-t}\,G(\cdot,m_{n}(T))](x)-\int_{t}^T D[R_{s-t}\mathcal{H}(\cdot,Dv_{n}(s,\cdot), m_{n}(s))](x)\d s.
$$
Hence, fixing $t\in[0,T)$ and considering \cite[Th.\,4.37]{FGS} and  {Assumption \ref{ass:HJB}}, we have, for some $C>0$, 
\begin{align*}
&\|Dv(t,\cdot) -Dv_n(t,\cdot)\|_{\infty}\leq \left\|D[R_{T-t} (G(\cdot,m(T))-G(\cdot,m_{n}(T)))]\right\|_{\infty}\\&
\ \ \ \ + \int_{t}^T \Big\|D\Big[R_{s-t}\big(\mathcal{H}(\cdot,Dv(s,\cdot), m(s))- \mathcal{H}(\cdot,Dv_{n}(s,\cdot), m_{n}(s)\big)\Big]\Big\|_{\infty}\,\,\d s\\
&\leq C \Big((T-t)^{-\gamma} \|G(\cdot,m(T))-G(\cdot,m_{n}(T))\|_{\infty}\\
& \ \ \ \ \ \ \ \ \ \  +\int_{t}^T (s-t)^{-\gamma}\left(\|Dv(s,\cdot)-Dv_n(s,\cdot)\|_{\infty}+ \mathbf{d}_{1}\big(m(s),m_{n}(s)\big)\Big)\d s\right)\\
&\leq C\left({\eta_{n}}(T-t)^{-\gamma}+ {\rho_{\infty}\big(m,m_{n}\big)} T^{1-\gamma}+ \int_{t}^T (s-t)^{-\gamma}\|Dv(s,\cdot)-Dv_n(s,\cdot)\|_{\infty}\right)\d s,
\end{align*}
where, {by Assumption \ref{ass:HJB}(iii),}
$$
{\eta_{n}:=  \|G(\cdot,m(T))-G(\cdot,m_{n}(T))\|_{\infty}\to 0\quad\mbox{as}\,\,n\to\infty.}
$$
By a suitable generalization of Gronwall's Lemma (see \cite[Prop. D.30]{FGS}), we get
\begin{align*}
&\|Dv(t,\cdot) -Dv_n(t,\cdot)\|_{\infty}\leq C(T-t)^{-\gamma}\left({\eta_{n}}+  T  {\rho_{\infty}\big(m,m_{n}\big)}\right).
\end{align*}
Multiplying both sides by $(T-t)^{\gamma}$ and taking the supremum over $t\in[0,T)$, we get 
\begin{align*}
&\|Dv -Dv_n\|_{C_{b,\gamma}}\leq C\left({\eta_{n}}+  T  {\rho_{\infty}\big(m,m_{n}\big)}\right)
\end{align*}
and the claim follows.
\end{enumerate}
\end{proof}

\section{The MFG System: Existence of Solutions}\label{sec:MFGsystem}
We turn to the analysis of the coupled MFG system  \eqref{HJBbis}-\eqref{FPbis}. Let Assumptions \ref{ass:w}(i) and \ref{ass:HJB} hold. Given $m\in \mathcal{S}$, we denote by $v^{(m)}$
the unique mild solution to \eqref{HJBbis}.
Moreover, recalling the notation $X^{w}$ used in Section \ref{sec:FP}, we set 
$$X_t^{(m)}:= X_{t}^{\mathcal{H}_{p}(\cdot,Dv^{(m)}(\cdot,\cdot), m(\cdot))}.$$

In the remainder of this section, $\{\mathbf{e}_{k}\}_{k\in\mathbb{N}}$ is  the orthonormal basis provided by Assumption \ref{ass:A}.
\begin{lemma}\label{lemma:SS}
Let Assumptions \ref{ass:A} and  \ref{ass:w} holds. For $n\in \N$, set
$$\alpha_{n}:=(2|\lambda_{n}|)^{-1}, \ \ \ \beta_{n}:=\int_{H} \langle x, \mathbf{e}_n\rangle^{2} m_{0}(\d x).
$$

\begin{enumerate}[(i)]
\item
We have
\begin{equation*}\label{AN}
 \int_{H} \sup_{t\in[0,T]} \langle e^{tA}x,\mathbf{e}_n\rangle^{2}m_{0}(\d x)\leq \beta_{n}, \ \ \ \forall n\in\N.
\end{equation*}
\medskip
\item
We have
$$\sup_{t\in[0,T],\,x\in H}\int_{0}^t \langle e^{(t-s)A}w(s,x),\mathbf{e}_n\rangle^{2} \d s
\leq  \alpha_{n} \cdot
\sup_{s\in (0,T),\, x\in H} \langle w(s,x),\mathbf{e}_n\rangle^{2}
\ \leq \ \alpha_{n} {\|w\|^2_{\infty}},$$ 
for every  $n\in\N.$
\medskip
\item
We have
\begin{equation*}\label{CN}
\sup_{t\in[0,T]}\E \langle W^{A}_{t},\mathbf{e}_n\rangle^2\leq  \alpha_{n}, \ \ \ \ \ \ \ \forall n\in\N.
\end{equation*}
\end{enumerate}
\end{lemma}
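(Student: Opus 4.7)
The three estimates all rely on a single basic observation: since $A$ is self-adjoint with $A\mathbf{e}_n = \lambda_n \mathbf{e}_n$, we have
\[
\langle e^{\tau A} y, \mathbf{e}_n\rangle = \langle y, e^{\tau A}\mathbf{e}_n\rangle = e^{\lambda_n \tau}\langle y, \mathbf{e}_n\rangle, \qquad \tau \geq 0, \ y\in H,
\]
so that projecting onto the $n$-th mode turns the action of the semigroup into a simple scalar exponential decay (recall $\lambda_n<0$). Each of the three bounds is then just a one-dimensional computation.

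For \textbf{(i)}, the above identity gives $\langle e^{tA}x, \mathbf{e}_n\rangle^2 = e^{2\lambda_n t}\langle x, \mathbf{e}_n\rangle^2 \leq \langle x, \mathbf{e}_n\rangle^2$ for all $t\in [0,T]$; the supremum is attained at $t=0$, and integration against $m_0$ yields $\beta_n$.

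For \textbf{(ii)}, the projection identity gives
\[
\int_0^t \langle e^{(t-s)A}w(s,x), \mathbf{e}_n\rangle^2 \, \d s = \int_0^t e^{2\lambda_n(t-s)} \langle w(s,x), \mathbf{e}_n\rangle^2\, \d s,
\]
which after the change of variable $u=t-s$ is bounded by $\bigl(\sup_{s,x}\langle w(s,x),\mathbf{e}_n\rangle^2\bigr) \int_0^t e^{2\lambda_n u}\,\d u$. The remaining scalar integral equals $\frac{1-e^{2\lambda_n t}}{2|\lambda_n|} \leq \alpha_n$, which gives the first inequality. The second inequality is immediate from $\langle w(s,x),\mathbf{e}_n\rangle^2 \leq |w(s,x)|^2 \leq \|w\|_\infty^2$.

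For \textbf{(iii)}, we use the fact that $W_t^A$ is a centered Gaussian with covariance $Q_t = \int_0^t e^{2sA}\,\d s$, so
\[
\E\langle W_t^A, \mathbf{e}_n\rangle^2 = \langle Q_t \mathbf{e}_n, \mathbf{e}_n\rangle = \int_0^t e^{2\lambda_n s}\,\d s = \frac{1-e^{2\lambda_n t}}{2|\lambda_n|} \leq \alpha_n.
\]
There is no genuine obstacle here: the whole lemma is a direct exploitation of the spectral structure in Assumption \ref{ass:A}, and the mild-solution/stochastic-convolution framework set up in Section \ref{sub:prob}. The only minor point is to remember that the covariance of $W_t^A$ is $Q_t = \int_0^t e^{2sA}\,\d s$ because $A$ is self-adjoint (so $e^{sA^*} = e^{sA}$), which makes (iii) fully consistent with the formula stated in Subsection \ref{sub:prob}.
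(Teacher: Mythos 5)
Your proof is correct and follows essentially the same route as the paper's: project onto $\mathbf{e}_n$ using self-adjointness so that $e^{\tau A}$ acts as the scalar $e^{\lambda_n\tau}$, then do the three one-dimensional estimates (with the elementary bound $\int_0^t e^{2\lambda_n u}\,\d u\leq (2|\lambda_n|)^{-1}$ and the covariance formula for $W_t^A$). No gaps.
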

\begin{proof}
\begin{enumerate}[(i)]
\item In fact,
\begin{align*}
&\int_{H}\sup_{t\in[0,T]} \langle e^{tA}x,\mathbf{e}_n\rangle^{2}m_{0}(\d x)=
\int_{H}\sup_{t\in[0,T]} \langle x,e^{tA}\mathbf{e}_n\rangle^{2}m_{0}(\d x)
\\\\&
=\int_{H} \left(\sup_{t\in[0,T]} e^{2\lambda_{n} t}\right) \langle x, \mathbf{e}_n\rangle^{2} m_{0}(\d x) \leq 
\int_{H} \langle x, \mathbf{e}_n\rangle^{2} m_{0}(\d x)=\beta_{n}.
 \end{align*}
\item  We compute,
\begin{align*}
&\sup_{t\in[0,T], \, x\in H}\ \int_{0}^t \langle e^{(t-s)A}w(s,x),\mathbf{e}_n\rangle^{2} \d s= \sup_{t\in[0,T], \, x\in H}\ \int_{0}^t \langle w(s,x),e^{(t-s)A}\mathbf{e}_n\rangle^{2} \d s
\\\\
&= \sup_{t\in[0,T], \, x\in H}\ \int_{0}^t e^{2(t-s)\lambda_{n}} \langle w(s,x),\mathbf{e}_n\rangle^{2} \d s\\& \leq
\sup_{s\in (0,T),\, x\in H} \langle w(s,x),\mathbf{e}_n\rangle^{2}\cdot
\sup_{t\in [0,T]}\int_{0}^t e^{2(t-s)\lambda_{n}}\d s
\\[3mm]
&\le
\alpha_n \cdot \sup_{s\in (0,T),\, x\in H} \langle w(s,x),\mathbf{e}_n\rangle^{2}
\le
\alpha_n  \|w\|^2_{\infty}.
\end{align*}
\item Recall that, by Assumption \ref{ass:A}, we have
$$\mbox{Cov}(W^{A}_{t})=Q_{t}=\int_{0}^{t} e^{2sA}\d s.$$ Then,
 $$
\sup_{t\in[0,T]}\E \langle W^{A}_{t},\mathbf{e}_n\rangle^2 =\sup_{t\in[0,T]} \langle Q_{t}\mathbf{e}_{n},\mathbf{e}_n\rangle
\leq \langle Q_{T}\mathbf{e}_{n},\mathbf{e}_n\rangle= (2|\lambda_{n}|)^{-1}.
$$
\end{enumerate}
\end{proof}
\bigskip
Set\footnote{{Note that $a_n$ only depends on the data  $A$, $\mathcal{H}$, and on the initial measure $m_0$.}}
$$a_{n}:=3\Big(\beta_{n}+\alpha_{n}+\alpha_{n}\|\mathcal{H}_{p}\|_\infty^2\Big).$$
Then, by Assumptions \ref{ass:A} and \ref{ass:w}(i),
we have
\begin{equation}\label{sumak}
\mathbf{a}=(a_{n})_{n\in\mathbb{N}}\in \ell_{1}.
\end{equation}
Let us
consider the map
\begin{equation}\label{Def:Psi}
\Psi:  \mathcal{S}\to \mathcal{S}, \ \ \ \Psi(m)(t):= \mathcal{L}(X_{t}^{(m)}).
\end{equation}
This map is well defined  due to Proposition \ref{lemma:DD}.

\begin{proposition}\label{prop:stima}
Let Assumptions \ref{ass:A},  \ref{ass:w}(i) and \ref{ass:HJB}  hold. Then, for each $m\in \mathcal{S}$, $n\in\N$, and $t\in[0,T]$, we have 
\begin{align*}
\int_{H} |\langle x,\mathbf{e}_n\rangle|^{2} \Psi(m)(t,\d x)\leq  a_{n}.
\end{align*}
\end{proposition}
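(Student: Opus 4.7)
\textbf{Proof plan for Proposition \ref{prop:stima}.}

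The idea is to use the mild form of the SDE satisfied by $X^{(m)}$, project onto the coordinate $\mathbf{e}_n$, apply the elementary inequality $(a+b+c)^2 \le 3(a^2+b^2+c^2)$, take expectation, and then bound each of the three resulting terms by means of the three items of Lemma \ref{lemma:SS} (with drift $w = \mathcal{H}_p(\cdot,Dv^{(m)}(\cdot,\cdot),m(\cdot))$, which is bounded by $\|\mathcal{H}_p\|_\infty$ thanks to Assumption \ref{ass:HJB}(ii)).

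Concretely, writing $w(s,x)=\mathcal{H}_p(x,Dv^{(m)}(s,x),m(s))$, Theorem \ref{th:mildSDE} gives
\[
X^{(m)}_t = e^{tA}X_0 + \int_0^t e^{(t-s)A}w(s,X^{(m)}_s)\,\d s + W^A_t, \qquad t\in[0,T].
\]
Projecting onto $\mathbf{e}_n$ and squaring,
\[
\langle X^{(m)}_t,\mathbf{e}_n\rangle^2 \le 3\left[\langle e^{tA}X_0,\mathbf{e}_n\rangle^2 + \Big(\int_0^t\langle e^{(t-s)A}w(s,X^{(m)}_s),\mathbf{e}_n\rangle\,\d s\Big)^2 + \langle W^A_t,\mathbf{e}_n\rangle^2\right].
\]
Now I would take $\E$ of both sides and treat the three terms separately: the first one is bounded directly by $\beta_n$ by Lemma \ref{lemma:SS}(i) (since $\langle e^{tA}X_0,\mathbf{e}_n\rangle^2=e^{2\lambda_n t}\langle X_0,\mathbf{e}_n\rangle^2\leq \langle X_0,\mathbf{e}_n\rangle^2$); the third one is bounded by $\alpha_n$ by Lemma \ref{lemma:SS}(iii).

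For the middle (drift) term, which is the only one requiring a small additional step, I would first apply Cauchy-Schwarz in time,
\[
\Big(\int_0^t\langle e^{(t-s)A}w(s,X^{(m)}_s),\mathbf{e}_n\rangle\,\d s\Big)^2 \le t\int_0^t\langle e^{(t-s)A}w(s,X^{(m)}_s),\mathbf{e}_n\rangle^2\,\d s,
\]
and then bound the integrand pathwise by means of Lemma \ref{lemma:SS}(ii) applied along the trajectory $X^{(m)}_\cdot(\omega)$, using $\|w\|_\infty\le\|\mathcal{H}_p\|_\infty$; taking expectation then yields a bound of the form $C\,\alpha_n\|\mathcal{H}_p\|_\infty^2$ (the $T$-factor coming from the Cauchy-Schwarz step being absorbed in the constant). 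Combining the three estimates gives exactly $3(\beta_n+\alpha_n+\alpha_n\|\mathcal{H}_p\|_\infty^2)=a_n$, as claimed.

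The only genuinely delicate point is handling the drift term: we must be careful that, along the random trajectory $X^{(m)}_s(\omega)$, the bound in Lemma \ref{lemma:SS}(ii) still applies — which it does because that lemma controls the integral uniformly in $x$, so we may substitute $x=X^{(m)}_s(\omega)$ and then integrate over $\omega$. Beyond this, the proof is essentially a direct computation assembling the three estimates of Lemma \ref{lemma:SS}.
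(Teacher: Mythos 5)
Your argument is essentially the paper's own proof: mild formulation of the SDE for $X^{(m)}$, projection onto $\mathbf{e}_n$, the elementary inequality $(a+b+c)^2\le 3(a^2+b^2+c^2)$, and then the three bounds of Lemma \ref{lemma:SS} applied with $w=\mathcal{H}_p(\cdot,Dv^{(m)}(\cdot,\cdot),m(\cdot))$, which is bounded by $\|\mathcal{H}_p\|_\infty$. One quantitative remark: the Cauchy--Schwarz step you use on the drift term genuinely produces a factor $t\le T$, and this cannot simply be ``absorbed in the constant'' if the goal is the bound with $a_n=3\big(\beta_n+\alpha_n+\alpha_n\|\mathcal{H}_p\|_\infty^2\big)$ exactly as defined --- your chain of inequalities yields $3\big(\beta_n+\alpha_n+T\,\alpha_n\|\mathcal{H}_p\|_\infty^2\big)$; the paper's proof silently writes $\big(\int_0^t g\,\d s\big)^2\le\int_0^t g^2\,\d s$ and thus drops the same factor, which is harmless for everything that follows (only $(a_n)_{n}\in\ell_{1}$ is ever used), but strictly speaking either $a_n$ should carry a $T$ in front of its last term or one must take $T\le 1$.
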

\begin{proof}
We have, using the equation for $X_t^{(m)}$ and denoting $w(s,x)=\mathcal{H}_{p}(x,Dv^{(m)}(s,x),m(s))$, 
\begin{align*}
& \int_{H} |\langle x,\mathbf{e}_n\rangle|^{2} \Psi(m)(t,\d x)=
\E\langle X^{(m)}_{t}, \mathbf{e}_n\rangle^{2}\\
&\leq 3
\E\left[\langle e^{tA} X_{0},\mathbf{e}_n\rangle^{2} + \int_{0}^t \langle e^{(t-s)A} w(s,X^{(m)}_s),\mathbf{e}_n\rangle^{2} \d s + \langle W^{A}_{t},\mathbf{e}_n\rangle^{2}\right]
\end{align*}
Applying  Lemma \ref{lemma:SS}(i), we get the following estimate for the first term
$$
\E\left[\langle e^{tA} X_{0},\mathbf{e}_n\rangle^{2} \right]
=\int_H\langle e^{tA} x,\mathbf{e}_n\rangle^{2}m_0(\d x)
\le \beta_n.
$$
As for the second term, we have by Lemma \ref{lemma:SS}(ii)
\begin{align*}
\E\left[ \int_{0}^t \langle e^{(t-s)A} w(s,X^{(m)}_s),\mathbf{e}_n\rangle^{2} \d s\right] &
\le \sup_{x\in H}  \int_{0}^t \langle e^{(t-s)A} w(s,x),\mathbf{e}_n\rangle^{2} \d s \ \le \ \alpha_n \|\mathcal{H}_p\|_{\infty}^2.
\end{align*}
The last term is estimated by
Lemma \ref{lemma:SS}(iii), which gives
$$
\E\left[\langle W^{A}_{t},\mathbf{e}_n\rangle^{2}\right]
\le \alpha_n.
$$
The claim now follows from the definition of $a_n$.
\end{proof}

Let $c_0$ be the constant of Proposition \ref{lemma:DD}. We set 
$$
\hat{c}_{m_0}:=1+ c_0\left(1+\int_{H}|x|^4m_{0}(\d x)+
\|\mathcal{H}_{p}\|_{\infty}^4\right)
$$
and
\begin{equation}\label{eq:qo}
\mathcal{R}_{m_0}:=\left\{\mu\in\mathcal{P}_{1}(H): \  \int_{H} |x|^{4}\mu(\d x)\leq \hat{c}_{m_0}\right\}.
\end{equation}
Observe that,   by Lemma \ref{lemmaq} and by \cite[Remark 7.13 (ii)]{Villani},  the metric $\mathbf{d}_{1}$ metrizes on $\mathcal{R}_{m_0}$ the topology of weak convergence in $\mathcal{P}(H)$.
In view of Proposition \ref{prop:stima}, we also define
$$\mathcal{Q}_{m_{0}}:=
\left\{\mu\in \mathcal{R}_{m_0}: \ \int_{H}\langle x,\mathbf{e}_{k}\rangle^{2}\mu(\d x)\leq a_{k}\right\}.$$
By Proposition \ref{lemma:compactPH}, $\mathcal{Q}_{m_0}$ is $\mathbf{d}_{1}$-compact.
We now consider a subset of $\mathcal{S}$ of functions which take values in the set $\mathcal{Q}_{m_0}$ and are $\mathbf{d}_{1}$-equi-uniformly continuous. More precisely,
recalling Proposition \ref{lemma:DD}(ii) we define
\begin{equation}\label{eq:defomegam0}
\omega_{m_0}:=\omega_{m_0,\|\mathcal{H}_{p}\|_{\infty}}
\end{equation}
and consider the subset $\mathcal{C}_{m_0}\subset \mathcal{S}$ defined as
\begin{align}
\label{CM}
\mathcal{C}_{m_0}:=&\Big\{m:[0,T]\to \mathcal{Q}_{m_0}:  \  m(0)=m_{0},
\ \mathbf{d}_{1}(m(t),m(s))\leq \omega_{m_{0}}(|t-s|)
\Big\}.
\end{align}
\begin{lemma}\label{lemma:CC}
Let Assumptions \ref{ass:A}, \ref{ass:w}{(i)} and  \ref{ass:HJB} hold. The set
$\mathcal{C}_{m_0}$ is convex and compact in $(\mathcal{S},\rho_\infty)$.
\end{lemma}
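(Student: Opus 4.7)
The plan is to verify convexity directly from the definitions and to get compactness as a straightforward application of the Arzelà-Ascoli theorem, leveraging the already established $\mathbf{d}_1$-compactness of $\mathcal{Q}_{m_0}$ (Proposition \ref{lemma:compactPH}) together with the built-in equi-uniform continuity encoded in the definition of $\mathcal{C}_{m_0}$.

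For convexity, I would fix $m_1,m_2\in\mathcal{C}_{m_0}$ and $\lambda\in[0,1]$, and set $m(t):=\lambda m_1(t)+(1-\lambda) m_2(t)$. Clearly $m(0)=m_0$, and by linearity of integration,
$$
\int_H |x|^4 m(t,\d x) \le \hat{c}_{m_0}, \qquad \int_H \langle x,\mathbf{e}_k\rangle^2 m(t,\d x) \le a_k,
$$
so $m(t)\in\mathcal{Q}_{m_0}$ for every $t\in[0,T]$. For the modulus, I pick optimal couplings $\gamma_i\in\Gamma(m_i(t),m_i(s))$ and observe that the mixture $\gamma:=\lambda\gamma_1+(1-\lambda)\gamma_2$ lies in $\Gamma(m(t),m(s))$, so
$$
\mathbf{d}_1(m(t),m(s)) \le \int_{H\times H}|x-y|\,\gamma(\d x,\d y) \le \lambda\,\omega_{m_0}(|t-s|)+(1-\lambda)\,\omega_{m_0}(|t-s|) = \omega_{m_0}(|t-s|).
$$

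For compactness, the key observation is that $\mathcal{C}_{m_0}\subset C([0,T];(\mathcal{Q}_{m_0},\mathbf{d}_1))$ with all elements admitting the common modulus $\omega_{m_0}$, and $(\mathcal{Q}_{m_0},\mathbf{d}_1)$ is a compact metric space. The Arzelà-Ascoli theorem in this setting yields precompactness of $\mathcal{C}_{m_0}$ in $(\mathcal{S},\rho_\infty)$. For closedness, suppose $\rho_\infty(m_n,m)\to 0$ with $m_n\in\mathcal{C}_{m_0}$: then $m_n(t)\to m(t)$ in $\mathbf{d}_1$ for each $t$, and since the compact set $\mathcal{Q}_{m_0}$ is in particular closed, $m(t)\in\mathcal{Q}_{m_0}$; the initial condition $m(0)=m_0$ and the inequality $\mathbf{d}_1(m(t),m(s))\le\omega_{m_0}(|t-s|)$ pass to the limit by continuity of $\mathbf{d}_1$.

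The argument is mostly soft; the only nontrivial input is the $\mathbf{d}_1$-compactness of $\mathcal{Q}_{m_0}$, which is precisely the content of Proposition \ref{lemma:compactPH} and is the reason the coordinatewise second-moment bounds $\int\langle x,\mathbf{e}_k\rangle^2 \le a_k$ with $\mathbf{a}\in\ell_1$ were established in Proposition \ref{prop:stima} and used in the definition of $\mathcal{Q}_{m_0}$. Once that is in hand, no further work beyond bookkeeping is required.
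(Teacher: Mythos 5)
Your proof is correct and follows the same overall architecture as the paper's (Arzel\`a--Ascoli for relative compactness, a separate closedness check, and a direct verification of convexity), so I will only flag the two sub-steps where your argument differs. For the convexity of the modulus bound, you glue optimal couplings: $\gamma=\lambda\gamma_1+(1-\lambda)\gamma_2\in\Gamma(m_\lambda(t),m_\lambda(s))$, which gives the convexity inequality for $\mathbf{d}_1$ in the pair of measures; the paper instead invokes Kantorovich--Rubinstein duality and bounds the supremum over ${\rm Lip}_1$ of a sum by the sum of suprema. Both are standard; yours needs existence of optimal couplings (true on a Polish space, and in any case avoidable with $\varepsilon$-optimal couplings), while the paper's needs the duality formula it cites from Villani. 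For closedness, you simply observe that $\mathcal{Q}_{m_0}$ is $\mathbf{d}_1$-compact, hence closed, so the pointwise limits $m(t)$ stay in $\mathcal{Q}_{m_0}$; the paper instead re-derives the fourth-moment and coordinatewise second-moment bounds for the limit by truncating with $|x|^4\wedge K$ and using weak convergence. Your shortcut is legitimate given that the paper has already asserted (right after defining $\mathcal{Q}_{m_0}$, via Proposition \ref{lemma:compactPH}) that $\mathcal{Q}_{m_0}$ is compact, and it is slightly cleaner; the paper's version has the merit of not relying on that earlier claim. No gaps.
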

\begin{proof}

\emph{Relative compactness.}
Relative compactness follows from
Proposition \ref{lemma:compactPH} and the Arzel\`a-Ascoli's theorem for functions with values in complete metric spaces.

\medskip

\emph{Closedness.}
Let $\{m_{n}\}_{n\in\mathbb{N}}\subset \mathcal{C}_{m_0}$ be such that $m_{n}\to m\in \mathcal{S}$. This means that  $$\sup_{[0,T]}\mathbf{d}_{1}(m_{n}(t), m(t))\to 0.$$ Clearly $m(0)=m_{0}$. Moreover, $m_{n}(t)$ converges weakly to $m(t)$ for all $t\in[0,T]$, so
$$
 \int_{H} (|x|^{4}\wedge K) m(t,\d x) =  \lim_{n\to\infty} \int_{H} (|x|^{4}\wedge K) m_n(t,\d x)\leq \hat{c}_{m_0}, \ \ \ \forall t\in[0,T], \ \forall K>0.
$$
Hence,
by dominated convergence
$$
 \int_{H} |x|^{4} m(t,\d x) = \lim_{K\to\infty}
 \int_{H} (|x|^{4}\wedge K) m(t,\d x) \leq \hat{c}_{m_0}.
$$
Similarly, one shows that
$$\sup_{t\in[0,T]} \int_{H}\langle x,\mathbf{e}_{k}\rangle^{2}m(t,\d x)\leq a_{k} \ \ \forall k\in\mathbb{N}.$$
Next, for $t,s\in[0,T]$, we have
\begin{align*}
 \mathbf{d}_{1}(m(t),m(s))& \leq \mathbf{d}_{1}(m_{n}(t), \mu(t))+ \mathbf{d}_{1} (m_{n}(s),m(s))+
 \mathbf{d}_{1}(m_{n}(t),m_{n}(s))		\\
 &\leq \mathbf{d}_{1}(m_{n}(t), m(t))+ \mathbf{d}_{1} (m_{n}(s),m(s)) +\omega_{m_{0}}(|t-s|)
\end{align*}
Taking the limit as $n\to\infty$ above, we get
$$ \mathbf{d}_{1}(m(t),m(s)) \leq \omega_{m_{0}}(|t-s|).$$
%

\medskip

\emph{Convexity.} Let $m_{1},m_{2}\in  \mathcal{C}_{m_0}$, $\lambda\in[0,1]$, and set $m_{\lambda}:=\lambda m_{1}+(1-\lambda)m_{2}$.
The only nontrivial claim to verify is that, for $t,s\in[0,T]$,
$$\mathbf{d}_{1}(m_{\lambda}(t),m_{\lambda}(s))\leq \omega_{m_{0}}(|t-s|).$$
We will use the fact (see, e.g.,  \cite[Rem.\,7.5(i)]{Villani} or  \cite[Cor.\,5.4]{CarmonaDelarueBook1}) that for $\mu,\nu\in \mathcal{P}_1(H)$ 
\[
\mathbf{d}_{1}(\mu,\nu)=\sup_{f\in \mbox{\footnotesize{Lip}}_1}\int_H f(x)(\mu(\d x)-\nu(\d x)),
\]
where ${\rm Lip}_1$ is the set of Lipschitz functions on $H$ with Lipschitz constant $1$.
Then
\begin{align*}
&\mathbf{d}_{1}(m_{\lambda}(t),m_{\lambda}(s))=\sup_{f\in \mbox{\footnotesize{Lip}}_1}\int_H f(x)(m_\lambda(t,\d x)-m_\lambda(s,\d x))
\\
&\leq
\lambda \sup_{f\in\mbox{\footnotesize{Lip}}_1}\int_H f(x)(m_1(t,\d x)-m_1(s,\d x))+(1-\lambda) \sup_{f\in \mbox{\footnotesize{Lip}}_1}\int_H f(x)(m_2(t,\d x)-m_2(s,\d x))
\\
&
=\lambda\mathbf{d}_{1}(m_1(t),m_1(s))+(1-\lambda)\mathbf{d}_{1}(m_2(t),m_2(s))\leq\omega_{m_{0}}(|t-s|),
\end{align*}
the claim.
\end{proof}
\medskip
In order to use a fixed point theorem to prove existence of solutions to the MFG system, we need to embed properly $\mathcal{C}_{m_0}$ into a topological vector space.
We consider  the  vector space $C([0,T];\mathcal{M}(H))$, where $\mathcal{M}(H)$ is endowed with the weak topology induced by the family of seminorms \eqref{semi}.
On the space $C([0,T];\mathcal{M}(H))$, we define a suitable topology  as follows. Define the family of seminorms
$$
\left|m(\cdot)\right|_{f}:=\sup_{t\in[0,T]}\left|\int_{H} f(x) m(t,\d x)\right|, \ \ \ \ f\in C_{b}(H).
$$
This family of seminorms induces on $C([0,T];\mathcal{M}(H))$ a topology, that we denote by $\tau_{\mathbf{uw}}$ and call  the \emph{uniform-weak topology},  that makes $C([0,T];\mathcal{M}(H))$  a locally convex topological vector space.
\begin{lemma}\label{lem:topology}
{Let Assumptions \ref{ass:A}, \ref{ass:w}{(i)} and  \ref{ass:HJB} hold.} We have the following inclusion:
$$\mathcal{C}_{m_0} \ \subset \ C([0,T];\mathcal{M}(H)).$$
Moreover, the topology induced by $\rho_{\infty}$ on $\mathcal{C}_{m_0}$ is the same as the one induced   by $\tau_{\mathbf{uw}}$.
\end{lemma}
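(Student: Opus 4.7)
The first claim is almost immediate. For $m\in\mathcal{C}_{m_0}$, the continuity estimate $\mathbf{d}_1(m(t),m(s))\le\omega_{m_0}(|t-s|)$ from the definition of $\mathcal{C}_{m_0}$, together with the fact that $\mathbf{d}_1$-convergence of probability measures implies convergence against each $f\in C_b(H)$, shows that $t\mapsto m(t)$ is continuous from $[0,T]$ into $(\mathcal{M}(H),\mathrm{weak})$. This gives the inclusion $\mathcal{C}_{m_0}\subset C([0,T];\mathcal{M}(H))$.

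For the coincidence of the two topologies, the plan is to exploit two features of the range $\mathcal{Q}_{m_0}$ already recorded in the paper: (a) $\mathcal{Q}_{m_0}$ is $\mathbf{d}_1$-compact by Proposition \ref{lemma:compactPH}, and (b) on $\mathcal{Q}_{m_0}$ the metric $\mathbf{d}_1$ metrizes the topology of weak convergence of $\mathcal{P}(H)$. Together with the compactness of $(\mathcal{C}_{m_0},\rho_\infty)$ established in Lemma \ref{lemma:CC}, these two facts carry the proof essentially for free.

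First I would verify that the identity $I:(\mathcal{C}_{m_0},\rho_\infty)\to (\mathcal{C}_{m_0},\tau_{\mathbf{uw}})$ is continuous. Fix $f\in C_b(H)$. By (b), the map $\mu\mapsto\int_H f\,d\mu$ is continuous on $(\mathcal{Q}_{m_0},\mathbf{d}_1)$, and by (a) it is in fact uniformly continuous there. Hence for each $\varepsilon>0$ there exists $\delta>0$, depending only on $f$, such that $\mathbf{d}_1(\mu,\nu)<\delta$ implies $|\int f\,d\mu-\int f\,d\nu|<\varepsilon$. If $\rho_\infty(m_n,m)<\delta$, then at every $t\in[0,T]$ this applies to $m_n(t),m(t)\in\mathcal{Q}_{m_0}$, and taking the supremum in $t$ yields $|m_n-m|_f\le\varepsilon$. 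This proves $I$ is $\tau_{\mathbf{uw}}$-continuous.

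For the reverse inclusion I would use a soft compactness argument. The seminorms $|\cdot|_f$, $f\in C_b(H)$, separate points of $\mathcal{C}_{m_0}$: two elements agreeing in $\tau_{\mathbf{uw}}$ must coincide at every $t$ when tested against each $f\in C_b(H)$, and probability measures on the Polish space $H$ are determined by such integrals. Hence $\tau_{\mathbf{uw}}$ is Hausdorff on $\mathcal{C}_{m_0}$. Since $I$ is then a continuous bijection from the compact space $(\mathcal{C}_{m_0},\rho_\infty)$ of Lemma \ref{lemma:CC} into a Hausdorff space, it is automatically a homeomorphism, and the two topologies coincide on $\mathcal{C}_{m_0}$. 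The main point one must be slightly careful about is that the modulus $\delta$ in the continuity step must be independent of $t$; this is exactly what the $\mathbf{d}_1$-compactness of the range $\mathcal{Q}_{m_0}$ supplies, and it is the only nontrivial input in the whole argument.
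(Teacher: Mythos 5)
Your proof is correct, but it takes a genuinely different route from the paper for the harder direction. For the implication ``$\rho_\infty$-convergence forces $\tau_{\mathbf{uw}}$-convergence'' the paper establishes equi-uniform continuity of the family $t\mapsto\int_H f\,\d m(t)$ by a contradiction/subsequence argument using weak compactness of $\mathcal{Q}_{m_0}$, whereas you get the required uniformity in $t$ directly from uniform continuity of $\mu\mapsto\int_H f\,\d\mu$ on the $\mathbf{d}_1$-compact set $\mathcal{Q}_{m_0}$ --- a cleaner derivation of the same quantitative fact. For the reverse implication the paper performs an explicit comparison of neighbourhood bases (reducing to finitely many time points via equicontinuity and then invoking metrizability of weak convergence on $\mathcal{Q}_{m_0}$), while you bypass this entirely with the soft argument that a continuous bijection from a compact space onto a Hausdorff space is a homeomorphism, feeding in the compactness of $(\mathcal{C}_{m_0},\rho_\infty)$ from Lemma \ref{lemma:CC} and the fact that the seminorms $|\cdot|_f$ separate points of $C([0,T];\mathcal{M}(H))$ (finite Borel measures on a metric space are determined by integrals against $C_b(H)$). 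Since Lemma \ref{lemma:CC} is proved before and independently of this lemma, there is no circularity; the only trade-off is that your argument is tied to the compact set $\mathcal{C}_{m_0}$ and would not by itself identify the two topologies on a larger, non-compact class, whereas the paper's neighbourhood-base comparison is in principle more local. Within the scope of the statement as given, your proof is complete and arguably more economical.
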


\begin{proof}
In the following we use the notation $m(\cdot)$ to denote elements of $\mathcal{C}_{m_0}$ in order to avoid confusion.

We first prove the inclusion. Obviously, every function in $\mathcal{C}_{m_0}$ can also be seen as a function with values in $\mathcal{M}(H)$.
The inclusion then holds if we prove that
every function in $\mathcal{C}_{m_0}$ is continuous as a function from $[0,T]$ to $\mathcal{M}(H)$.
Indeed, let $m(\cdot)\in \mathcal{C}_{m_0}$;
then, for any $t_0\in [0,T]$ and any sequence
$t_n \to t_0$ one has
$\mathbf{d}_1(m(t_n),m(t_0)) \to 0$ as $n\to \infty$.
This implies that, in particular, $m(t_n)$ converges weakly to $m(t_0)$ when $n \to  \infty$, which completes the proof.
\begin{enumerate}[(i)]
 \item
\emph{$\tau_{\mathbf{uw}}$  stronger than $\rho_{\infty}$.} Let $\bar{m}(\cdot)\in\mathcal{C}_{m_0}$, $\varepsilon>0$ and consider the $\rho_\infty$-neighborhood
 $$
 \mathcal{U}(\bar{m}(\cdot)):=\left\{m(\cdot)\in\mathcal{C}_{m_0}: \ \sup_{t\in[0,T]}\mathbf{d}_{1}(m(t),\bar{m}(t))<\varepsilon\right\}.
 $$
 By the equiuniform $\mathbf{d}_{1}$-continuity of the elements of $\mathcal{C}_{m_0}$, there exists $\{t_{1},...,t_{N}\}\subset[0,T]$ such that
  $$\mathcal{U}_{N}(\bar{m}(\cdot)):=\Big\{m(\cdot)\in \mathcal{C}_{m_0}: \  \ \ \mathbf{d}_{1}(m(t_{i}),\bar{m}(t_{i}))<\varepsilon/2,\  \  \forall i=1,...,N \Big\}\subseteq \mathcal{U}(\bar{m}(\cdot)).$$
Since $\mathbf{d}_{1}$ metrizes the weak topology in $\mathcal{Q}_{m_0}$, for each $i=1,...,N$ we have the existence of $\delta>0$ and  a family $\{f_{i,1},...,f_{i,M}\}\subset C_{b}(H)$ such that
\begin{align*}
\mathcal{V}_{t_{i}}(\bar{m}(t_{i}))&:=\{\mu\in \mathcal{Q}_{m_0}: \ |\mu-\bar{m}(t_{i})|_{f_{i,j}}<\delta, \ \   \ \forall j=1,...,M\}\\
&\subseteq \{\mu\in \mathcal{Q}_{m_0}: \ \mathbf{d}_{1}(\mu,\bar{m}(t_{i}))<\varepsilon/2\}=:\mathcal{U}_{t_{i}}(\bar{m}(t_{i}))
\end{align*}
Define now the $\tau_{\mathbf{uw}}$-neighborhood  of $\bar{m}(\cdot)$
 $$
  \mathcal{V}(\bar{m}(\cdot)):=\left\{\mu\in\mathcal{C}_{m_0}: \ |m(\cdot)-\bar{m}(\cdot)|_{f_{i,j}}<\delta, \\\ \forall j=1,...,M, \ \forall i=1,...,N \right\}.$$
  Then,
\begin{align*}
 \mathcal{V}(\bar{m}(\cdot))&\subseteq \{m(\cdot)\in\mathcal{C}_{m_0}: \ m(t_{i})\in \mathcal{V}_{t_{i}}(\bar{m}(t_{i})) \ \ \forall i=1,...,N\}\\
 &\subseteq
\left\{m(\cdot)\in\mathcal{C}_{m_0}:  \ m(t_{i})\in \mathcal{U}_{t_{i}}(\bar{m}_{0}(t_{i})),  \ \forall i=1,...,N\right\}\\
&= \mathcal{U}_{N}(\bar{m}(\cdot)) \subseteq \mathcal{U}(\bar{m}(\cdot)),
\end{align*}
concluding the proof of this part.

\smallskip

 \item
\emph{$\rho_\infty$ stronger than $\tau_{\mathbf{uw}}$.}  First, let us show that, for each $f\in C_{b}(H)$, the family
\begin{equation}\label{qui}
 \left\{[0,T]\to \R, \ \ \ t\mapsto \int_{H}{f}(x)m(t,\d x)\right\}_{m(\cdot)\in\mathcal{C}_{m_0}}
\end{equation}
is equiuniformly continuous; that is, there exists a modulus of continuity $\omega_{f}$ such that
\begin{equation}\label{quibis}
\left|\int_{H}{f}(x)(m(t,\d x)-m(s,\d x))\right|\leq \omega_{f}(|t-s|), \ \ \forall t,s\in[0,T], \ \ \ \forall m\in \mathcal{C}_{m_0}.
\end{equation}
Assume, by contradiction, that this is not true. Then, we may find $\eta>0$ and suitable sequences $\{s_{n}\},\{t_{n}\}\subset[0,T]$,
$\{m_{n}(\cdot)\}\subset \mathcal{C}_{m_0}$,
such that
\begin{equation}\label{use}
\left|\int_{H}f(x)(m_{n}(t_{n},\d x)-m_{n}(s_{n},\d x))\right|\geq \eta,   \ \ |s_{n}-t_{n}|\to 0.
\end{equation}
Now, on the one hand, by $\mathbf{d}_{1}$-equiuniform continuity of the functions belonging to  $\mathcal{C}_{m_0}$, we have
\begin{equation}\label{df}
\mathbf{d}_{1}(m_{n}(s_{n}),m_{n}(t_n))\to 0;
\end{equation}
on the other hand,
 by weak compactness of $\mathcal{Q}_{m_0}$, up to passing to subsequences, we have
 \begin{equation}\label{gh}
 m_{n}(s_{n})\stackrel{w}{\to}{\mu_{1}}, \ \  m_{n}({t_{n}})\stackrel{w}{\to}{\mu_{2}}
 \end{equation}
 {for some $\mu_1,\mu_2\in \mathcal{Q}_{m_0}$.}
 By \eqref{df}, we must have
 \begin{equation}\label{def}
{\mu_{1}=\mu_{2}.}
 \end{equation}
  Then, letting $n\to\infty$ in  \eqref{use} and  using \eqref{gh} and \eqref{def}, we get
 $$0<\eta\leq \lim_{n\to \infty} \left|\int_{H}f(x)(m_{n}(t_{n},\d x)-m_{n}(s_{n},\d x))\right|= 0,$$
 a contradiction. This proves \eqref{qui}.\medskip

Now, let  $\bar{m}(\cdot)\in\mathcal{C}_{m_0}$, $\varepsilon>0$, $\{f_{1},...,f_{N}\}\subset  C_{b}(H)$ and consider the $\tau_{\mathbf{uw}}$-neighborhood
 $$
 \mathcal{V}(\bar{m}(\cdot)):=\Big\{m(\cdot)\in\mathcal{C}_{m_0}: \ |m(t)-\bar{m}(t)|_{f_{i}}<\varepsilon \ \ \ \forall t\in[0,T], \ \forall i=1,...,N \Big\}.
 $$
 By \eqref{qui}, we have the existence of $\{t_{1},...t_{M}\}\subset [0,T]$ such that
   \begin{align*}
 \mathcal{V}_{N}(\bar{m}(\cdot))&:=\Big\{m(\cdot)\in\mathcal{C}_{m_0}: \ |m(t_{j})-\bar{m}(t_{j})|_{f_{i}}<\varepsilon/2 ,\ \ \forall i=1,...,N, \ \forall j=1,...,M \Big\}\\&\subseteq \mathcal{V}(\bar{m}(\cdot)).
 \end{align*}
 Since $\mathbf{d}_{1}$ metrizes the weak topology on $\mathcal{Q}_{m_0}$, for each $j=1,...,M$ we have the existence of $\delta_{j}>0$ such that
 $$\mathbf{d}_{1}(m(t_{j}),\bar{m}(t_{j})) <\delta_{j} \ \Rightarrow \ |m(t_{j})-\bar{m}(t_{j})|_{f_{i}}<\varepsilon/2, \ \ \ \ \ \forall i=1,...,N.$$
Let now $\delta:=\min\{\delta_{1},...,\delta_{M}\}>0$.
 Then,
 \begin{align*}
\mathcal{U}(\bar{m}(\cdot))&:=\left\{m(\cdot)\in \mathcal{C}_{m_0}: \ \sup_{t\in[0,T]} \mathbf{d}_{1}(m(t),\bar{m}(t))<\delta \right\}
\\
&\subseteq \Big\{m(\cdot)\in \mathcal{C}_{m_0}: \  \mathbf{d}_{1}(m(t_{j}),\bar{m}(t_{j}))<\delta \ \ \forall j=1,...,M \Big\}\\&\subseteq  \mathcal{V}_{N}(\bar{m}(\cdot))\subseteq  \mathcal{V}(\bar{m}(\cdot)),
\end{align*}
concluding the proof.
 \end{enumerate}
 \end{proof}
We now define a solution to the MFG system \eqref{HJBbis}--\eqref{FPbis}.
\begin{definition}\label{def:mfg}
A pair $(v,m)$ is a solution to the MFG system \eqref{HJBbis}--\eqref{FPbis} if $v$ is a mild solution to \eqref{HJBbis} (Definition \ref{def:mildHJB}) and  $m$ is a weak solution of \eqref{FP} with $w(t,x)=\mathcal{H}_{p}(x,Dv(t,x), m(t))$ (Definition \ref{def:solFP}).
\end{definition}
\begin{theorem}\label{teo:existence}
Let Assumptions \ref{ass:A}, \ref{ass:w}(i), and \ref{ass:HJB} hold.  Then, the MFG system \eqref{HJBbis}--\eqref{FPbis} admits a solution. 
\end{theorem}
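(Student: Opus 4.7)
The plan is to apply Tikhonov's fixed point theorem to the map $\Psi$ of \eqref{Def:Psi}, restricted to the subset $\mathcal{C}_{m_0}$ of \eqref{CM}. By Lemma \ref{lemma:CC} this set is convex and compact in $(\mathcal{S}, \rho_\infty)$, and by Lemma \ref{lem:topology} its metric topology coincides with the trace of the locally convex topology $\tau_{\mathbf{uw}}$, so $\mathcal{C}_{m_0}$ sits as a convex compact subset of a locally convex topological vector space. Nonemptiness is immediate: the constant map $t \mapsto m_0$ belongs to $\mathcal{C}_{m_0}$ because $\int_H \langle x, \mathbf{e}_k\rangle^2 m_0(\d x) = \beta_k \leq a_k$ and $\int_H |x|^4 m_0(\d x) \leq \hat{c}_{m_0}$ by construction of these constants.

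The first step is to verify $\Psi(\mathcal{C}_{m_0}) \subseteq \mathcal{C}_{m_0}$. Given $m \in \mathcal{C}_{m_0}$, Theorem \ref{Th:HJB}(i) produces the unique mild solution $v^{(m)}$, so the drift $w^{(m)}(t,x) := \mathcal{H}_p(x, Dv^{(m)}(t,x), m(t))$ is bounded by $\|\mathcal{H}_p\|_\infty$ and continuous on $(0,T) \times H$, which is precisely Assumption \ref{ass:w}(ii), allowing me to invoke Theorem \ref{th:mildSDE}. The fourth-moment bound defining $\mathcal{R}_{m_0}$ follows from Proposition \ref{lemma:DD}(i) with constant $\hat{c}_{m_0}$; the coordinate bound $a_k$ is exactly Proposition \ref{prop:stima}; and the equi-continuity modulus $\omega_{m_0}$ of \eqref{eq:defomegam0} is given by Proposition \ref{lemma:DD}(ii) with $R = \|\mathcal{H}_p\|_\infty$. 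Together these place $\Psi(m)$ in $\mathcal{C}_{m_0}$.

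The main obstacle is the continuity of $\Psi$ on $\mathcal{C}_{m_0}$. Assume $\rho_\infty(m_n, m) \to 0$. Theorem \ref{Th:HJB}(ii) yields $\|Dv^{(m_n)} - Dv^{(m)}\|_{C_{b,\gamma}} \to 0$, which combined with the Lipschitz estimate of Assumption \ref{ass:HJB}(ii) gives
\[
\sup_{x \in H}\bigl|w^{(m_n)}(s,x) - w^{(m)}(s,x)\bigr| \leq C\bigl((T-s)^{-\gamma}\|Dv^{(m_n)} - Dv^{(m)}\|_{C_{b,\gamma}} + \mathbf{d}_1(m_n(s), m(s))\bigr) \longrightarrow 0
\]
for every $s \in [0,T)$, with the uniform bound $2\|\mathcal{H}_p\|_\infty$. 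Since $\{\Psi(m_n)\} \subseteq \mathcal{C}_{m_0}$ and the latter is compact, it suffices to show that every $\rho_\infty$-subsequential limit $\mu$ of $\Psi(m_n)$ equals $\Psi(m)$. By Proposition \ref{prp:w}, each $\Psi(m_n)$ satisfies the weak identity \eqref{weakFP} with drift $w^{(m_n)}$; I would pass to the limit in this identity against any fixed $\varphi \in \mathcal{D}_T$, using the uniform convergence $\mathbf{d}_1(\Psi(m_n)(s), \mu(s)) \to 0$ to handle the $\partial_t\varphi + L_0\varphi$ terms. For the drift term I split $\langle w^{(m_n)}, D\varphi\rangle = \langle w^{(m_n)} - w^{(m)}, D\varphi\rangle + \langle w^{(m)}, D\varphi\rangle$: the first summand is controlled by dominated convergence (the singularity at $s = T$ is $L^1$-integrable in $s$ since $\gamma \in (0,1)$), while the second converges by testing the bounded continuous integrand $\langle w^{(m)}(s,\cdot), D\varphi(s,\cdot)\rangle$ against $\Psi(m_n)(s) \to \mu(s)$, followed by a further application of dominated convergence in $s$. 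Thus $\mu$ solves \eqref{weakFP} with drift $w^{(m)}$, and since the fourth-moment bound built into $\mathcal{C}_{m_0}$ places $\mu$ in $\mathcal{S}_4$, Proposition \ref{lemma:DF} forces $\mu = \Psi(m)$. Compactness and uniqueness of subsequential limits then imply $\Psi(m_n) \to \Psi(m)$.

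Tikhonov's fixed point theorem applied to $\Psi:\mathcal{C}_{m_0} \to \mathcal{C}_{m_0}$ then produces $\hat{m} \in \mathcal{C}_{m_0}$ with $\Psi(\hat{m}) = \hat{m}$. Setting $\hat{v} := v^{(\hat{m})}$ and invoking Proposition \ref{prp:w} once more, the pair $(\hat{v}, \hat{m})$ satisfies Definition \ref{def:mfg}.
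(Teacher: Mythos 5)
Your proposal is correct and follows essentially the same route as the paper: Tikhonov's fixed point theorem on the convex compact set $\mathcal{C}_{m_0}$, invariance via Propositions \ref{lemma:DD} and \ref{prop:stima}, and continuity of $\Psi$ by identifying every subsequential limit with $\Psi(m)$ through the weak Fokker--Planck identity and the uniqueness result of Proposition \ref{lemma:DF}. Your treatment of the drift term in the limit passage (splitting off $w^{(m_n)}-w^{(m)}$ and integrating the $(T-s)^{-\gamma}$ singularity) is in fact slightly more explicit than the paper's appeal to dominated convergence, and the nonemptiness remark is a harmless addition the paper leaves implicit.
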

\begin{proof}
By construction, if $m^{*}$ is a fixed point of the map $\Psi$ defined in \eqref{Def:Psi}, then the couple $(v^{(m^{*})},m^{*})$ is a solution to the MFG system \eqref{HJBbis}--\eqref{FPbis}. We are therefore going to show that $\Psi$ admits a fixed point.

We consider $\mathcal{C}_{m_0}$ embedded in the topological vector space $(C([0,T];\mathcal{M}(H)),\tau_{\mathbf{uw}})$. By 
Lemma \ref{lem:topology} we can indifferently use on $\mathcal{C}_{m_0}$ the $\tau_{\mathbf{uw}}$ or the $\rho_\infty$ topology.
In what follows, the topological notions are referred, indifferently, to one of those equivalent topologies.
%
%
%
We want to apply  Tikhonov's fixed point theorem.
First of all, we notice that, by the definitions of $\hat{c}_{m_0}$ and $\mathbf{a}$, using Proposition \ref{lemma:DD} and Proposition \ref{prop:stima}, we have
\begin{equation}\label{eq:pp}
\Psi(\mathcal{C}_{m_0})\subseteq \mathcal{C}_{m_0}.
\end{equation}
Since $\mathcal{C}_{m_0}$ is compact and convex, and since \eqref{eq:pp} holds, in order to conclude the existence of a fixed point of $\Psi$ in 
$\mathcal{C}_{m_0}$, we need to show that $\Psi|_{\mathcal{C}_{m_0}}$ is continuous.  The remaining part of the proof is devoted to this goal.

Let $\{m_{n}\}\subset\mathcal{C}_{m_0}$ be such that   $m_{n}\to m\in \mathcal{C}_{m_0}$.  Since $\mathcal{C}_{m_0}$ is compact, from each subsequence $(m_{n_{k}})$, one can extract a sub-subsequence such that $\Psi(m_{n_{k_h}})\to \hat{m}$ for some $\hat{m}\in \mathcal{C}_{m_0}$.
 Set
 $$w^{(m)}:=\mathcal{H}_{p}(\cdot,Dv^{(m)}(\cdot,\cdot), m(\cdot)), \ \ \ \ w^{(m_{n_{k_{h}}})}:=\mathcal{H}_{p}(\cdot,Dv^{(m_{n_{k_{h}}})}(\cdot,\cdot), m_{n_{k_{h}}}(\cdot)).$$
For each $h\in\N$,
\begin{align*}
&
\int_{H} \varphi (t,x) \Psi(m_{n_{k_{h}}})(t,\d x) - \int_{H} \varphi (0,x) m_0(\d x)\\&
= \int_0^{t} \left(\int_{H}\left[\partial_{t}\varphi(s,x)+L_{0}\varphi(s,x)-\langle w^{(m_{n_{k_{h}}})}(s,x), D\varphi(s,x)\rangle \right]\Psi(m_{n_{k_{h}}})(s,\d x)\right)\d s
\end{align*}
for every $t\in(0,T]$  and $\varphi\in D_{T}$.  
Considering that, by Theorem \ref{Th:HJB}(ii), we have $w^{(m_{n_{k_{h}}})}\to w^{(m)}$ pointwise as $h\to\infty$, we take the limit as $h\to\infty$ using dominated convergence theorem in the equation above to obtain
\begin{align*}
&
\int_{H} \varphi (t,x) \hat m(t,\d x) - \int_{H} \varphi (0,x) m_{0}(\d x)\\&
= \int_0^{t} \left(\int_{H}\left[\partial_{t}\varphi(s,x)+L_{0}\varphi(s,x)-\langle w^{(m)}(s,x), D\varphi(s,x)\rangle \right]\hat m(s,\d x)\right)\d s.
\end{align*}
On the other hand, we also have
\begin{align*}
&
\int_{H} \varphi (t,x) \mathcal{L}(X^{(m)}_{\cdot})(t,\d x) - \int_{H} \varphi (0,x) m_{0}(\d x)\\&
= \int_0^{t} \left(\int_{H}\left[\partial_{t}\varphi(s,x)+L_{0}\varphi(s,x)-\langle w^{(m)}(s,x), D\varphi(s,x)\rangle \right]\mathcal{L}(X^{(m)}_{\cdot})(s,\d x)\right)\d s.
\end{align*}
Hence, by Proposition \ref{lemma:DF}, we must have $\hat m=\mathcal{L}(X^{(m)}_{\cdot})$.
By the arbitrariness of $k\mapsto n_{k}$, it follows that 
$$
\Psi(m_{n})\to \hat m =\mathcal{L}(X^{(m)}_{\cdot})=\Psi(m) \ \ \ \mbox{in} \ (\mathcal{S},\rho_{\infty}),
$$
which gives the continuity of $\Psi$.
\end{proof}

\section{The MFG System: Uniqueness of Solutions}\label{sec:exist-uniq}
In this section, we establish the uniqueness of solutions for our MFG system \eqref{HJBbis}--\eqref{FPbis}. %
 It is well known, even in finite-dimensional state spaces (see \cite[Rem.\,1.4,\,p.32]{MR4214774}), that uniqueness of a solution for this system cannot generally be guaranteed. A typical approach to ensure uniqueness is to impose certain monotonicity conditions on the data.
One of the most common monotonicity conditions is the \emph{Lasry-Lions monotonicity condition} (see, for example, \cite{LasryLionsCRMI06, LasryLionsCRMII06, LasryLionsJJM07} and  \cite[Section 1.3.2]{MR4214774}). For other monotonicity conditions that ensure uniqueness in finite-dimensional MFG systems, readers can refer to \cite{Graber-Meszaros, MeszarosMou} and the references therein.
Another typical feature of uniqueness results is the separability of the Hamiltonian function $\mathcal{H}$ (as assumed in Assumption \ref{hp:uniqueness}(i) here). Well-posedness of MFG systems without separability has been studied in \cite{MeszarosMou}. Here, we assume separability and use a variant of the Lasry-Lions monotonicity condition from \cite[Theorem 1.4]{MR4214774}. Our assumptions are as follows.
\begin{assumption}
\label{hp:uniqueness}
\begin{enumerate}[(i)]\smallskip
\item[]
\smallskip
  \item
The Hamiltonian $\mathcal{H}$ has a separated form, i.e.
$$
\mathcal{H}(x,p,\mu)=\mathcal{H}^0(x,p)-F(x,\mu),
$$
for some continuous  functions
$$
\mathcal{H}^0:H \times H \to \R,
\qquad F: H\times \mathcal{P}_1(H)\to \R.
$$
\smallskip
  \item
The map $\mathcal{H}^0$ is convex in $p$.
\medskip
  \item
The functions $F$ and $G$ are monotone in $\mu\in\mathcal{P}_{1}(H)$ in the following sense
$$
\int_H [F(x,\mu_1)-F(x,\mu_2)](\mu_1-\mu_2)(\d x)
\ge 0
\quad
\forall \mu_1,\mu_2\in \mathcal{P}_1(H),
$$
$$
\int_H [G(x,\mu_1)-G(x,\mu_2)](\mu_1-\mu_2)(\d x) \ge 0
\quad
\forall \mu_1,\mu_2\in \mathcal{P}_1(H).
$$
\smallskip
  \item
One of the following condition is satisfied:
\begin{enumerate}[(a)]
\medskip
  \item The following is 
  satisfied:
$$
\int_H [F(x,\mu_1)-F(x,\mu_2)](\mu_1-\mu_2)(\d x)
> 0
\quad
\forall \mu_1,\mu_2\in \mathcal{P}_1(H),\;  \mu_1\ne \mu_2.
$$
\medskip
  \item {The following implications hold:}
$$
\int_H [F(x,\mu_1)-F(x,\mu_2)](\mu_1-\mu_2)(\d x)
= 0
\quad \Longrightarrow\quad F(\cdot,\mu_1)=F(\cdot,\mu_2),
$$
$$
\int_H [G(x,\mu_1)-G(x,\mu_2)](\mu_1-\mu_2)(\d x) = 0
\quad \Longrightarrow\quad G(\cdot,\mu_1)=G(\cdot,\mu_2).
$$
\medskip
  \item The following implication holds:
$$
\mathcal{H}^0(x,p_1)-\mathcal{H}^0(x,p_2)
-\langle \mathcal{H}^0_p(x,p_2), p_{1}-p_{2}\rangle =0
$$
$$
\quad \Longrightarrow\quad\mathcal{H}^0_p(x,p_1)=\mathcal{H}^0_p(x,p_2),
\quad \forall x\in H, \; p_1,p_2\in H
$$
\end{enumerate}
\end{enumerate}
\end{assumption}

\begin{theorem}
\label{th:uniqueness}
Let Assumptions  \ref{ass:w}(i), \ref{ass:HJB}, and
\ref{hp:uniqueness} hold.
Then, there exists a unique solution to
the MFG system  \eqref{HJBbis}--\eqref{FPbis}.
\end{theorem}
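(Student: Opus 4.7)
\medskip

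\noindent\textbf{Proof proposal.} Existence is already guaranteed by Theorem \ref{teo:existence}. For uniqueness, the plan is to implement the classical Lasry-Lions duality argument, carefully adapted to the infinite-dimensional mild/weak framework. Suppose $(v_1,m_1)$ and $(v_2,m_2)$ are two solutions and set $v=v_1-v_2$, which is a mild solution of the linear Kolmogorov equation
\begin{equation*}
-\partial_tv-Lv+a(t,x)-b(t,x)=0,\qquad v(T,x)=G(x,m_1(T))-G(x,m_2(T)),
\end{equation*}
where $a(t,x):=\mathcal{H}^0(x,Dv_1(t,x))-\mathcal{H}^0(x,Dv_2(t,x))$ and $b(t,x):=F(x,m_1(t))-F(x,m_2(t))$. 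The formal target identity, obtained by differentiating $t\mapsto\int_H v(t,x)(m_1(t,dx)-m_2(t,dx))$ and integrating on $[0,T]$, is
\begin{equation*}
\int_H[G(\cdot,m_1(T))-G(\cdot,m_2(T))]\,d(m_1(T)-m_2(T))+\int_0^T\!\!\int_H[F(\cdot,m_1)-F(\cdot,m_2)]\,d(m_1-m_2)\,ds=-J_1-J_2,
\end{equation*}
where
\begin{align*}
J_i:=\int_0^T\!\!\int_H\bigl[\mathcal{H}^0(x,Dv_{3-i})-\mathcal{H}^0(x,Dv_i)-\langle Dv_{3-i}-Dv_i,\mathcal{H}^0_p(x,Dv_i)\rangle\bigr]\,m_i(s,dx)\,ds.
\end{align*}

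\medskip

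\noindent The principal obstacle is that $v$ is only a mild solution, so it does not belong to the test-function class $\mathcal{D}_T$ used in Definition \ref{def:solFP}; hence the above identity cannot be obtained by plugging $v$ directly into \eqref{weakFP}. To overcome this, I would regularize along the lines suggested by the authors: approximate $v$ by $v^\varepsilon\in\mathcal{D}_T$ constructed as the (smooth) mild solution of an approximating linear Kolmogorov equation obtained by mollifying its source $a-b$ and terminal datum via finite-dimensional Galerkin truncations together with convolution along $e^{\eta A}$ (an Ornstein--Uhlenbeck smoothing), taking advantage of the regularizing properties of $R_t$ cited in Theorem \ref{Th:HJB}(ii) and Remark \ref{rem:assA}. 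Using $v^\varepsilon$ as a test function in the weak formulation \eqref{weakFP} for both $m_1$ and $m_2$ (with drifts $\mathcal{H}^0_p(\cdot,Dv_i)$) and subtracting, yields the analogue of the target identity with an $\varepsilon$-error; the key step is showing $v^\varepsilon\to v$ and $Dv^\varepsilon\to Dv$ in the weighted spaces $C_{b,\gamma}$, so that the integrals pass to the limit using the boundedness of $\mathcal{H}^0_p$ from Assumption \ref{ass:HJB}(ii), the boundedness of the higher moments of $m_i$ from Proposition \ref{lemma:DD}(i), and dominated convergence. The bulk of the technical work goes into controlling the commutators coming from the Galerkin/OU regularization and verifying that $v^\varepsilon$ indeed lies in $\mathcal{D}_T$ (trace-class $D^2v^\varepsilon$ and $A^*Dv^\varepsilon\in C_b$), which relies on the spectral structure provided by Assumption \ref{ass:A} together with the trace summability \eqref{condo}.

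\medskip

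\noindent Once the identity is established, the conclusion is standard. The left-hand side is nonnegative by the monotonicity condition \ref{hp:uniqueness}(iii), while both $J_1,J_2$ are nonnegative by the convexity of $\mathcal{H}^0(x,\cdot)$ (Assumption \ref{hp:uniqueness}(ii)), forcing all four quantities to vanish. If case \ref{hp:uniqueness}(iv)(a) holds, vanishing of the $F$-monotonicity integral forces $m_1(s)=m_2(s)$ for a.e.\ $s\in[0,T]$; then $v_1$ and $v_2$ are mild solutions of the same \eqref{HJBbis} and Theorem \ref{Th:HJB}(i) gives $v_1=v_2$. Under case \ref{hp:uniqueness}(iv)(b) one obtains instead $F(\cdot,m_1(s))=F(\cdot,m_2(s))$ a.e.\ and $G(\cdot,m_1(T))=G(\cdot,m_2(T))$, so again $v_1$ and $v_2$ solve the same mild HJB and coincide; consequently $m_1$ and $m_2$ are weak solutions of the same linear Fokker--Planck equation and Proposition \ref{lemma:DF} yields $m_1=m_2$. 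Under case \ref{hp:uniqueness}(iv)(c), vanishing of $J_1+J_2$ combined with the implication stated there gives $\mathcal{H}^0_p(x,Dv_1)=\mathcal{H}^0_p(x,Dv_2)$ on the supports of $m_1$ and $m_2$; hence both measures solve the linear \eqref{FP} with the same drift, so $m_1=m_2$ by Proposition \ref{lemma:DF}, and then $v_1=v_2$ again by Theorem \ref{Th:HJB}(i). In every case we conclude $(v_1,m_1)=(v_2,m_2)$.
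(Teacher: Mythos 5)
Your proposal follows essentially the same route as the paper's proof: existence via Theorem \ref{teo:existence}, then the Lasry--Lions duality identity obtained by approximating the difference $\bar v=v_1-v_2$ (a mild solution of a linear Kolmogorov equation with source $f$ and terminal datum $G(\cdot,m_1(T))-G(\cdot,m_2(T))$) by classical solutions belonging to $\mathcal{D}_T$ of approximating Kolmogorov equations, using these as test functions in the weak Fokker--Planck formulation, passing to the limit, and concluding from monotonicity and convexity with the same three-case analysis for Assumption \ref{hp:uniqueness}(iv). The one caveat is that you assert $v^\varepsilon\to v$ and $Dv^\varepsilon\to Dv$ in the $C_{b,\gamma}$ \emph{norm}, which is stronger than what such an approximation can deliver for merely bounded continuous data on an infinite-dimensional $H$ (and stronger than needed); the paper instead uses $\mathcal{K}$-convergence --- uniform bounds in $C_{b,\gamma}$ together with weighted uniform convergence on compact sets --- and passes to the limit by dominated convergence against the measures $m_i(s,\d x)\,\d s$, exploiting the integrability of $(T-s)^{-\gamma}$, which is the correct mode here.
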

\begin{proof}
\emph{Existence.} Existence of a solution was proved in Theorem \ref{teo:existence}.

\smallskip
\emph{Uniqueness.} The proof is an adaptation to our  Hilbert space setting of a typical proof of uniqueness in finite dimensional spaces (see, e.g., \cite[Theorem 1.4]{MR4214774}). 

Let  $(v_1,m_1)$ and $(v_2,m_2)$ be two solutions
of our MFG system.
We set
$\bar{v}=v_1-v_2$ and $\bar{m}=m_1-m_2$.
Then, $\bar{v}$ solves the integral equation
\begin{align*}
\bar{v}(t,x)=&\ R_{T-t}[G(\cdot,m_1(T))-G(\cdot,m_2(T))](x)\\&
-\int_{t}^T R_{s-t}[\mathcal{H}^0(\cdot,Du_1(s,\cdot))
-\mathcal{H}^0(\cdot,Du_2(s,\cdot))](x)\d s\\&\ \ \
-\int_{t}^T R_{s-t}[F(\cdot,m_1(s))
-F(\cdot,m_2(s))](x)\d s,
\end{align*}
which means that $\bar{v}$ is a mild solution to the equation
$$
\begin{cases}
- {\partial_t}\bar{v}- {L} \bar{v}+
\mathcal{H}^0(x, Dv_1(t,x))-\mathcal{H}^0(x, Dv_2(t,x))
-\left( F(x,m_1(t))- F(x,m_2(t))\right)=0,\smallskip\\
\bar{v}(T,x)=G(x,m_1(T))-G(x,m_2(T)).
\end{cases}
$$
On the other hand, observing that that $\bar{m}_0\equiv 0$, we get that $\bar{m}$ satisfies, for every $\varphi\in\mathcal{D}_{T}$,
\begin{small}
\begin{align}\label{eq:FPbar}
&
\int_{H} \varphi (t,x) \bar{m}(t,\d x) - \int_{H} \varphi (0,x) \bar{m}(0,\d x)
= \int_0^{T} \int_{H}\left[\partial_{t}\varphi(s,x)+L_{0}\varphi(s,x)\right]\bar{m}(s,\d x)\\&
- \int_0^{T} \left(\int_{H}\langle\mathcal{H}^0_p(x,Dv_1(s,x)),\, D\varphi(s,x)\rangle \,\,m_1(s,\d x)
-\int_{H}\langle\mathcal{H}^0_p(x,Dv_2(s,x)),\, D\varphi (s,x)\rangle \,\,m_2(s,\d x)\right)\d s.\nonumber
\end{align}
\end{small}
We now set
$$
f(s,x):=
\mathcal{H}^0(x,Dv_1(s,x))
-\mathcal{H}^0(x,Dv_2(s,x))
-\left( F(x,m_1(s))- F(x,m_2(s))\right).
$$
By Assumption \ref{ass:HJB}, by Proposition \ref{lemma:DD}, and by the definition of solution to \eqref{HJBbis}-\eqref{FPbis}, we have  $f\in C_{b,\gamma}([0,T)\times H)$, where the latter space is defined in \eqref{eqB:Cmgammadefbis}.
Then, we consider the Kolmogorov equation
$$
\begin{cases}
{\partial_t}{v}(t,x)+ {L} {v}(t,x)=f(t,x),\medskip\\
v(T,x)=\phi(x):=G(x,m_1(T))-G(x,m_2(T)),
\end{cases}
$$
{By Theorem \ref{th:approx}, $\bar{v}$
is also a $\mathcal{K}$-strong solution of the same equation and moreover, we can choose 
the approximating data $\phi_n$, $f_n$ such that the solutions ${\bar{v}}_n$ of the approximating Kolmogorov equations 
$$
\begin{cases}
{\partial_t}{\bar{v}_n}(t,x)+ {L_0} {\bar{v}_n}(t,x)=f_n(t,x),\medskip\\
\bar {v}_n(T,x)=\phi_n(x)
\end{cases}
$$
belong to $\mathcal{D}_T$ for all $n\in \N$.}
Now,
using $\varphi= \bar{v}_{n}$ in  \eqref{eq:FPbar}, we get
\begin{small}
\begin{align}\label{aaa}
&
\int_{H} \bar{v}_n (T,x) \bar{m}(t,\d x)
=
\int_0^{T} \left(\int_{H}\left[\partial_{t}\bar{v}_n(s,x)+L_{0}\bar{v}_n(s,x) \right]\bar{m}(s,\d x)\right)\d s
\\&\nonumber
- \int_0^{T} \left(\int_{H}\left[\langle\mathcal{H}^0_p(x,Dv_1(s,x)),\, D\bar{v}_n(s,x)\rangle \right]m_1(s,\d x)
-\int_{H}\left[\langle\mathcal{H}^0_p(x,Dv_2(s,x)),\, D\bar{v}_n(s,x)\rangle \right]m_2(s,\d x)\right)\d s.
\end{align}
\end{small}
On the other hand, for every $s\in [0,T]$, we can integrate with respect to the measure $\bar{m}(s)$ the Kolmogorov equations solved by the $\bar{v}_n$'s, to get
$$
\int_{H}\left(\partial_{t} \bar{v}_{n}(s,x)+L_{0}\bar{v}_{n}(s,x)\right)\bar{m}(s,\d x)
=
\int_{H}f_n(s,x)
\bar{m}(s,\d x).
$$
Plugging this equality into \eqref{aaa}, we obtain
\begin{small}
\begin{align}\label{aaabis}
&
\int_{H} \bar{v}_n (T,x) \bar{m}(t,\d x) -\int_{0}^{T}\left(\int_{H}f_n(s,x)
\bar{m}(s,\d x)\right)\d s
=
\\&\nonumber
- \int_0^{T} \left(\int_{H}\left[\langle\mathcal{H}^0_p(x,Dv_1(s,x)), D\bar{v}_n(s,x)\rangle \right]m_1(s,\d x)
- \int_{H}\left[\langle\mathcal{H}^0_p(x,Dv_2(s,x)), D\bar{v}_n(s,x)\rangle \right]m_2(s,\d x)\right)\d s.
\end{align}
\end{small}
%
%

We now need to pass to the limit in \eqref{aaabis} as $n\to\infty$. By the definition of $\mathcal{K}$-convergence in $C_{b,\gamma}([0,T) \times H)$ (Definition \ref{dfB:piKconvweight}), there is $M\in\mathbb {R}$ such that for every $(s,x)\in [0,T)\times H$,
\[
\sup_{n\geq 1}|f_n(s,x)-f(s,x)|=(T-t)^{-\gamma}\sup_{n\geq 1}\Big[(T-s)^{\gamma}|f_n(s,x)-f(s,x)|\Big]\leq M(T-s)^{-\gamma}.
\]
Hence, since $(T-s)^{-\gamma}\in L^1(0,T)$ and $\lim_{n\to\infty}|f_n(s,x)-f(s,x)|=0$ for every $(s,x)\in [0,T)\times H$, we obtain
\[
\lim_{n\to\infty}\int_{0}^{T}\left(\int_{H}|f_n(s,x)-f(s,x)|
\bar{m}(s,\d x)\right)\d s=0
\]
by  Dominated Convergence Theorem. The convergence of the other terms follows directly by  Dominated Convergence Theorem, since the functions involved are uniformly bounded.
Therefore, passing to the limit in \eqref{aaabis}, we get
{\begin{small}
\begin{align}\label{aaabis2}
&0=-
\int_{H} \bar{v} (T,x) \bar{m}(t,\d x) +\int_{0}^{T}\left(\int_{H}f(s,x)
\bar{m}(s,\d x)\right)\d s
\\&\nonumber
- \int_0^{T} \left(\int_{H}\left[\langle\mathcal{H}^0_p(x,Dv_1(s,x)), D\bar{v}(s,x)\rangle \right]m_1(s,\d x)
- \int_{H}\left[\langle\mathcal{H}^0_p(x,Dv_2(s,x)), D\bar{v}(s,x)\rangle \right]m_2(s,\d x)\right)\d s.
\end{align}
\end{small}}
Hence, using the definition of $f$, we get
$$
0=\int_{H} \bar{u} (T,x) \bar{m}(t,\d x)+
\int_0^{T} \int_{H}[F(x,m_1(s))-F(x,m_2(s))]
\bar{m}(s,\d x)$$
$$
+
\int_0^{T} \int_{H}\left[ \mathcal{H}^{0}(x,Dv_2(s,x))-\mathcal{H}^{0}(x,Dv_1(s,x))
-\langle\mathcal{H}^0_p(x,Dv_1(s,x)),- D\bar v(s,x)\rangle \right]m_1(s,\d x)
$$
$$
+
\int_0^{T} \int_{H}\left[ \mathcal{H}^{0}(x,Dv_1(s,x))-\mathcal{H}^{0}(x,Dv_2(s,x))
-\langle\mathcal{H}^0_p(x,Dv_2(s,x)), \, D\bar{v}(s,x)\rangle \right]m_2(s,\d x).
$$
By Assumption \ref{hp:uniqueness}(ii)-(iii) all terms above are nonnegative, so they all must be equal to $0$.
We now conclude as follows.
\begin{enumerate}[(a)]
\item
If case (a) of Assumption \ref{hp:uniqueness}(iv) holds, we have $m_{1}=m_{2}$; then uniqueness of mild solutions of equation \eqref{HJBbis} gives $v_1=v_2$.
\item If case (b) of Assumption \ref{hp:uniqueness}(iv) holds, we first use uniqueness of mild solutions of equation \eqref{HJBbis}, which holds since 
$$F(x,m_1(s))=F(x,m_2(s)),  \ \ \ G(x,m_1(T))=G(x,m_2(T)),$$ to obtain $v_1=v_2$; then, we conclude that $m_{1}=m_{2}$ by uniqueness of weak solutions of \eqref{weakFP} with $w(s,x):=\mathcal{H}^0_p(x,Dv_1(s,x))=\mathcal{H}^0_p(x,Dv_2(s,x))$.
\item
{If case (c) of Assumption \ref{hp:uniqueness}(iv) holds, we first obtain 
$$\mathcal{H}^0_p(x,Dv_1(s,x))=\mathcal{H}^0_p(x,Dv_2(s,x)),$$ $m_1(s)$ and $m_2(s)$ a.e. for $s\in[0,T]$. This implies $m_{1}=m_{2}$ by uniqueness of weak solutions of \eqref{weakFP} with $w(s,x)=\mathcal{H}^0_p(x,Dv_1(s,x))$ (or equivalently with $w(s,x)=\mathcal{H}^0_p(x,Dv_2(s,x))$) and then conclude that $v_{1}=v_{2}$ using uniqueness of mild solutions of equation \eqref{HJBbis}.}
\end{enumerate}
\end{proof}

\section{Examples from Stochastic Optimal Control}\label{sec:example}
A typical application of our framework is when  the Hamiltonian $\mathcal{H}$ arises from stochastic optimal control problems. To illustrate that, consider the  $H$-valued stochastic optimal control problem with  value function
\begin{equation}\label{vex}
v(t,x)= \inf_{\alpha_\cdot \in \mathcal{A}_t} \E\left[\int_{t}^{T} f(X^{t,x,\alpha_\cdot}_{s},\alpha_{s}, m(s))\d s+g(X^{t,x,\alpha_\cdot}_{T},m(T))\right],
\end{equation}
where
$$\mathcal{A}_t=\big\{\alpha_\cdot:[t,T]\times \Omega \to\Lambda \ \ (\mathcal{F}_{s}^t)-\mbox{progressively measurable}\big\},$$
defined on some reference probability space $(\Omega,\mathcal{F}, (\mathcal{F}_s^t)_{s\geq t} ,\P,W)$, where $\Lambda$ is a complete separable metric space, $m(\cdot)\in\mathcal{S}$, and
$X^{t,x,\alpha_\cdot}$ solves, in mild sense  on $[t,T]$, the infinite dimensional SDE
\begin{equation}\label{SDEexample}
\d X_{s}=(A X_{s}+b(X_{s},\alpha_{s},m(s))) \d s+ \d W_{s}, \ \ \ \ X_{t}=x.
\end{equation}
We then have
$$
\mathcal{H}(x, p,\mu)=\sup_{\alpha\in \Lambda} \big\{-\langle b(x,\alpha,\mu),p\rangle-f(x,\alpha,\mu)\big\}.
$$
We make the following assumptions:
\begin{enumerate}[(i)]
\item
$\Lambda=B_R\subset H$;

\item
$b(x,\alpha, \mu)= -\alpha+b_0(x)$, where $b_0:H\to{H}$ is bounded and Lipschitz continuous;

\item
$f(x,\alpha,\mu)=f_0(x,\mu)+ f_1(|\alpha|)$, where $f_0:H\times\mathcal{P}_1(H)\to\R$ is continuous in all variables, $f_0(\cdot,\delta_0)$ is bounded, $f_0$ is Lipschitz continuous in $m$ with respect to ${\bf d}_1$, uniformly in $x$, and $f_1\in C^{1,1}_{\rm loc}(\mathbb{R})$ is symmetric and uniformly convex\footnote{This means that there exists $\eta >0$ such that $s\mapsto f_1(s) - \eta s^2$ is convex.};

\item
$g:H\times\mathcal{P}_1(H)\to\mathbb{R}$ is continuous and $g(\cdot,\delta_0)$ is bounded.
\end{enumerate}
Then,
\begin{align*}
\mathcal{H}(x, p,\mu)&=\sup_{|\alpha|\leq R}\big\{\langle \alpha,p\rangle-f_1(|\alpha|)\big\}-\langle b_0(x),p\rangle-f_0(x,\mu)\\&
=:\mathcal{H}^1(p)-\langle b_0(x),p\rangle-f_0(x,\mu).
\end{align*}
In this case
\[
\mathcal{H}^0(x,p)=\mathcal{H}^1(p)-\langle b_0(x),p\rangle,\quad F(x,\mu)=f_0(x,\mu),\quad G(x,\mu)=g(x,\mu),
\]
and it is easy to see that Assumption \ref{ass:HJB}(i) is satisfied. Regarding Assumption \ref{ass:HJB}(ii), we compute explicitly 
\[
\mathcal{H}^1(p)=\begin{cases}
(f_1')^{-1}(|p|)|p|-f_1((f_1')^{-1}(|p|)),\quad\mbox{if}\,\,|p|<f_1'(R),\medskip\\
R|p|-f_1(R),\ \ \ \ \ \ \ \ \ \ \ \ \ \ \ \ \ \ \  \ \ \ \ \quad\mbox{if}\,\,|p|\geq f_1'(R).
\end{cases}
\]
Since $f_1\in C^{1,1}(\mathbb{R})$ and is uniformly convex, we deduce that $(f_1')^{-1}$ is Lipschitz continuous, so $\mathcal{H}$ is Lipschitz continuous in $p$ too. Moreover, we have 
\[
D\mathcal{H}^1(p)=\begin{cases}
(f_1')^{-1}(|p|)\frac{p}{|p|},\ \ \quad\mbox{if}\,\,|p|<f_1'(R),\medskip\\
R\frac{p}{|p|}, \ \ \ \ \ \ \ \ \ \ \ \ \ \ \quad\mbox{if}\,\,|p|\geq f_1'(R),
\end{cases}
\]
and clearly this function is Lipschitz continuous. Hence, $\mathcal{H}^0_p(x,p)=D\mathcal{H}^1(p)-b_0(x)$ is bounded and Lipschitz continuous in $p$, uniformly in $x\in H$. {This makes our existence result, Theorem \ref{teo:existence}, applicable.}\\

As for uniqueness, examples of functions {$F$} satisfying Assumption \ref{hp:uniqueness}(iii) when $H=\R^d$ are for instance in Section 3.4.2 of \cite{carmona2018probabilistic}, volume I, and similar examples also work in a real separable Hilbert space $H$ and satisfy the conditions imposed in this paper. We present two examples.

{\begin{example}
Let $h_1:H\times\mathcal{P}_1(H)\to\mathbb{R}$ and $h_2:H\times\mathcal{P}_1(H)\to H$ be Lipschitz continuous and bounded. We define
\[
F_1(x,\mu):=h_1(x)\int_Hh_1(y)\mu(\d y),\quad F_2(x,\mu):=\left\langle h_2(x),\int_Hh_2(y)\mu(\d y)\right\rangle.
\]
We then have
\[
\int_H [F_i(x,\mu_1)-F_i(x,\mu_2)](\mu_1-\mu_2)(\d x)=\left|\int_Hh_i(y)(\mu_1-\mu_{2})(\d y)\right|^2,\quad i=1,2.
\]
Hence, Assumption \ref{hp:uniqueness}(iii) and Assumption \ref{hp:uniqueness}(iv)(b) are satisfied for $F_1$ and $F_2$. It is also easy to see that, for every $x\in H$,
\[
|F_i(x,\mu_1)-F_i(x,\mu_2)|\leq C_i \mathbf{d}_{1}(\mu_1,\mu_2),\quad i=1,2,
\]
where $C_i$ is the Lipschitz constant of $h_i$. Thus both functions satisfy Assumption \ref{ass:HJB}(i).
\end{example}
\begin{example}
Let $\ell:H\times[0,\infty)\to \R$ be bounded, continuous,  and  such that $\ell(x,\cdot)$ is strictly increasing for every $x\in H$ and is Lipschitz continuous with Lipschitz constant independent of $x$. Let $\rho:H\to[0,\infty)$ be bounded and Lipschitz continuous. Let $\nu$ be a positive finite measure on $H$ which is full\,\footnote{That is $\nu(D)>0$ for every open set $D$ in $H$. An example of a measure which is full is the nondegenerate Gaussian measure with mean $a\in H$ and covariance operator $Q\in \mathcal{L}_{1}^{+}(H)$, see \cite[Proposition 1.25]{DaPrato2006}  (nondegeneracy means that ${\rm ker}\,Q=\{0\}$).}. We define
\[
F(x,\mu):=\int_H \ell(z,\rho*\mu(z))\rho(z-x)\nu(dz),
\]
where $\rho*m$ denotes the convolution of $\rho$ with $\mu$, that is
\[
\rho*\mu(z)=\int_H \rho(z-u)\nu(du).
\]
We notice first that this convolution operation is Lipschitz continuous in $\mu$ and $z$. Indeed, let $z,y\in H$ and $\mu_1,\mu_2\in \mathcal{P}_1(H)$ and let $\gamma\in \Gamma(\mu_1,\mu_2)$. Then,
\[
\begin{split}
&|\rho*\mu_1(z)-\rho*\mu_2(y)|=\left|\int_H \rho(z-u)\mu_1(\d u)-\int_H \rho(y-w)\mu_2(\d w)\right|
\\
&\quad=\left|\int_H (\rho(z-u)-\rho(y-w))\gamma(\d u,\d w)\right|\leq C\int_H|(z-u)-(y-w)|\gamma(\d u,\d w)
\\
&
\quad\quad
\leq C|z-y|+C\int_H|u-w|\gamma(\d u,\d w).
\end{split}
\]
Taking the infimum over all $\gamma\in \Gamma(m_1,m_2)$ we thus obtain
\[
|\rho*\mu_1(z)-\rho*\mu_2(y)|\leq C(|z-y|+\mathbf{d}_{1}(\mu_1,\mu_2)).
\]
We now have
\[
\begin{split}
&\int_H [F(x,{\mu_1})-F(x,{\mu_2})]({\mu_1}-{\mu_2})(\d x)
\\
&
=\int_H [\ell(z,\rho*\mu_1(z))-\ell(z,\rho*\mu_2(z))]\int_H\rho(z-x)(\mu_1(\d x)-\mu_2(\d x))\nu(\d z)
\\
&
=\int_H [\ell(z,\rho*\mu_1(z))-\ell(z,\rho*\mu_2(z))][\rho*\mu_1(z)-\rho*\mu_2(z)]\nu(\d z)\geq 0,
\end{split}
\]
because $\ell(z,\cdot)$ is increasing. Moreover, if the last expression is equal to $0$, using that $\ell(z,\cdot)$ is strictly increasing, we conclude that 
$\rho*\mu_1(z)-\rho*\mu_2(z)=0, \mu$ a.e.. However, since this function is continuous and $\mu$ is full, this implies that $\rho*\mu_1(z)=\rho*\mu_2(z)$ for every $z\in H$. Hence, $F(\cdot,\mu_1)=F(\cdot,\mu_2)$ on $H$. Therefore, $F$ satisfies Assumption \ref{hp:uniqueness}(iii) and Assumption \ref{hp:uniqueness}(iv)(b).
Finally, we compute
\[
\begin{split}
&|F(x,\mu_1)-F(x,\mu_2)|
\leq C\int_H |\ell(z,\rho*\mu_1(z))-\ell(z,\rho*\mu_2(z))|\mu(\d z)
\\
&
\leq C\int_H |\rho*\mu_1(z)-\rho*\mu_2(z)|\mu(\d z)\leq C \mathbf{d}_{1}(\mu_1,\mu_2)
\end{split}
\]
which means that $F$ satisfies Assumption \ref{ass:HJB}(i).
\end{example}
\subsection{An economic model} 
We illustrate here an application to an economic model  in the presence of capital heterogeneity. Inspired  by the model presented in \cite[Sec.\,3(a)]{achdou}, a mean field game model of competition among firms operating in the same sector was studied in \cite{calvia2024mean}. Precisely, in \cite{calvia2024mean}, the production capacity of a representative firm is assumed  to follow the dynamics $$\d K_{s}= -\delta K_{s}\d s+\sigma K_{s}\d B_{s}+I_{s}\d {s}, \   \ K_0=k>0,$$ where $\delta,\sigma>0$,  $(I_{s})_{s\geq 0}$ is a nonnegative real valued process representing the investment rate of the firm, and $B$ is a standard one dimensional Brownian motion. According to the discussion in \cite[Sec.\,3(a)]{achdou},  the goal of a representative firm is to solve 
$$
 v(t,k):=\inf_{I_{\cdot}}\E\left[\int_{t}^{T} \left(- \pi(K_{s}, \mathcal{K}(m(s)))+\frac{1}{2} I_{s}^{2}\right)\d s\right],
$$
where $I_{\cdot}$ belong to a suitable set of admissible control processes,
\[
\mathcal{K}(\mu)= \left(\int_{\R} k^{\theta}\mu(\d k)\right)^{1/\theta} \ \ \ \  \ \ \mu \in \mathcal{P}_{\theta}(\R),
\] 
with $\theta >0$ and  $\pi:(0,\infty)^2\to \R$,  $(k,\mathcal{K})\mapsto \pi(k,\mathcal{K})$ is nondecreasing in $k$ and nonincreasing in $\mathcal{K}$.\footnote{In \cite{calvia2024mean}, the case $\theta=1$ and $\pi(k,\mathcal{K})=\mathcal{K}^{-\beta}k$, where $\beta >0$, is considered.}

This model can be modified as follows. First of all, 
in order to fit our setting, we assume that the noise is additive\footnote{
This is done to fit our additive noise setting. Note that, in this way, the capacity may become negative and this is of course an unpleasant fact from the point of view of applications.  It would thus be interesting to investigate, in future works, cases with non-additive noise which may ensure that the infinite dimensional process $X$ lies in the positive cone of $H=L^{2}(S^1;\R)$; see, e.g., \cite{GL}.}
 with the diffusion coefficient $1$; that is, we look at the dynamics of the form 
\[
\d K_{s}= -\delta K_{s}\d s+\d B_{s}+I_{s}\d {s}.
\] 
We may then  enrich the model assuming that, rather than having a large number of  single firms, we have a large number of networks of firms, each network being distributed around a circle $S^{1}$. The location of the firm in the representative network is parametrized by $a\in S^{1}$ and capital $K$ depends on this parameter. Therefore, the representative network's state is represented by a parametrized stochastic process $K_{s}(a)$, where $s\in[t,T]$ and $a\in S^{1}$. We may assume, following the modeling of \cite{BFFG}, that also the investment rate process $I_{s}(a)$ is parametrized by the location $a\in  S^{1}$, and that  capital $K_{s}(a)$ naturally evolves in time-space according to a depreciation law and,  by taking account of the ``spatial'' dimension $a\in S^{1}$, according  to a diffusion law; that is, according to the overall law 
\[
\frac{\partial^{2}K_{s}(a)}{\partial a^{2}} -\delta K_{s}(a).
\] 
Assuming also that the noise depends also on the location $a$, this  leads to the following controlled SPDE:
\begin{equation}\label{SDEeco}
\d K_{s}(a)=\left(\frac{\partial^{2}K_{s}(a)}{\partial a^{2}} -\delta K_{s}(a)\right)\d s+ \d B_{s}(a)+ I_{s}(a)\d s, \ \ \ K_0(a)=k(a).
\end{equation}
We consider the following generalization of the optimization problem above: 
\begin{equation}\label{funceco}
v(t,k(\cdot))=\inf_{I_{\cdot}(\cdot)}\E\left[\int_{t}^{T}\left( \int_{S^{1}}\left(-\pi(K_{s}(a), \mathcal{K}(m(s)))+\frac{1}{2} I_{s}(a)^{2}\right)\d a\right)\d s\right],
\end{equation}
where
\begin{equation}\label{Kgen}
\mathcal{K}(\mu)= \int_{H} \left(\int_{S_{1}}f(k(a))\d a\right)\mu(\d k), \ \ \ \mu\in \mathcal{P}_{1}(H),
\end{equation}
where $f:\R\to\R$ is a given Lipschitz and bounded function.
Formally setting  $W_{s}=B_{s}(\cdot)$ and assuming that $W$ is a cylindrical Brownian motion in the Hilbert space $H$, and formally setting $x=k(\cdot)$,  $X_{s}=K_{s}(\cdot)$, $\alpha_{s}=-I_{s}(\cdot)$, the SPDE \eqref{SDEeco} can be viewed in the setting of the present section as a controlled SDE \eqref{SDEexample} in the Hilbert space $H=L^{2}(S^{1};\R)$ with the following specifications:  
\[
A=\frac{\partial^{2}}{\partial a^{2}}-\delta,  \ \ \ b(x,\alpha)=- \alpha.
\]
Similarly, the optimal control problem can be reformulated as \eqref{vex} by setting $g=0$ and 
\[
f(x,\alpha, \mu)= \frac{|\alpha|^{2}}{2}+ F(x,\mu), 
\]
where
\[
F(x,\mu)=-\int_{S^{1}}\pi(x(a),\mathcal{K}(\mu))\d a.
\]
Imposing the constraint $I_{t}(\cdot)\in \Lambda=B_R$ and assuming that $\pi$ is Lipschitz and bounded, the conditions for our existence result (Theorem \ref{teo:existence}) are fulfilled.

Regarding uniqueness, we assume that $\pi(k,\mathcal{K})=\pi_{1}(\mathcal{K}) f(k)$, where $\pi_{1}$ is Lipschitz continuous, nonnegative, bounded, and strictly decreasing, and $f$ is the same function appearing in \eqref{Kgen} and required now to be  Lispschitz continuous, nonnegative, nondecreasing,  and bounded.    
Then, for every $\mu_1,\mu_{2}\in\mathcal{P}_{1}(H)$, we have 
\begin{align*}
&\int_H [F(x,\mu_1)-F(x,\mu_2)](\mu_1-\mu_2)(\d x)\\
=&-\int_{H}\left[\int_{S^{1}}\big(\pi(x(a),\mathcal{K}(\mu_{1}))- \pi(x(a),\mathcal{K}(\mu_{2}))\big)\d a\right](\mu_{1}-\mu_{2})(\d x)\\
 = &-
\big(\pi_{1}(\mathcal{K}(\mu_{1}))- \pi_{1}(\mathcal{K}(\mu_{2}))\big) \int_{H}\left(\int_{S^{1}}f(x(a)) \d a \right)(\mu_{1}-\mu_{2})(\d x)\\
 = &-
\big(\pi_{1}(\mathcal{K}(\mu_{1}))- \pi_{1}(\mathcal{K}(\mu_{2}))\big) (\mathcal{K}(\mu_{1})- \mathcal{K}(\mu_{2}))\geq 0.\\
 \end{align*}
 with equality if and only if $\mathcal{K}(\mu_{1})= \mathcal{K}(\mu_{2})$. This shows  that: 
 \begin{enumerate}[(i)]
\item   Assumption \ref{hp:uniqueness}(iii) is fulfilled; 
\item
 Since $$\big(\pi_{1}(\mathcal{K}(\mu_{1}))- \pi_{1}(\mathcal{K}(\mu_{2}))\big) (\mathcal{K}(\mu_{1})- \mathcal{K}(\mu_{2}))=0 \ \Longrightarrow \ \mathcal{K}(\mu_{1})= \mathcal{K}(\mu_{2}),$$ 
 also 
Assumption \ref{hp:uniqueness}(iv)(b) holds.
\end{enumerate}
 The remaining requirements of Assumption \ref{hp:uniqueness} are fulfilled by construction. Hence, our uniqueness result, Theorem \ref{th:uniqueness}, applies.

\appendix
\section{Compactness in the space of probability measures}\label{app:A}
We provide a result about compactness of sets in spaces of probability measures on $H$ which is used in the paper.
\begin{lemma}\label{lemmaq}
Let $L>0$ and
$$\mathcal{Q}_{L}:=\left\{\mu\in\mathcal{P}_{1}(H): \  \int_{H} |x|^{2}\mu(\d x)\leq L\right\}.$$
Then
$$
\lim_{R\to\infty}\sup_{\mu\in\mathcal{Q}_{L}}\int_{B_{R}^{c}} |x|\mu(\d x)\to 0.
$$
\end{lemma}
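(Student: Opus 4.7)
The statement is a uniform tightness/uniform integrability type estimate, and the proof will be a one-line Chebyshev-style inequality. The key observation is that on the complement $B_R^c$ one has $|x| > R$, hence $1 \le |x|/R$, which lets us bootstrap the first moment into the second moment that we control uniformly.

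More precisely, the plan is the following. Fix $R > 0$ and $\mu \in \mathcal{Q}_L$. On $B_R^c = \{x \in H : |x| > R\}$ we have $1 \le |x|/R$, so
\begin{equation*}
\int_{B_R^c} |x| \, \mu(\d x) \;\le\; \int_{B_R^c} |x| \cdot \frac{|x|}{R} \, \mu(\d x) \;=\; \frac{1}{R}\int_{B_R^c} |x|^2 \, \mu(\d x) \;\le\; \frac{1}{R}\int_{H} |x|^2 \, \mu(\d x) \;\le\; \frac{L}{R}.
\end{equation*}
Taking the supremum over $\mu \in \mathcal{Q}_L$ and then letting $R \to \infty$ yields the claim. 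There is essentially no obstacle here; the bound on the right-hand side is independent of $\mu$ because the definition of $\mathcal{Q}_L$ provides a uniform bound on the second moment. Note that the integrability in $|x|$ (and hence the finiteness of the integral we are estimating) is automatic: on $B_R^c$, $|x| \le |x|^2/R$ is in $L^1(\mu)$ since $\mu \in \mathcal{Q}_L$ has finite second moment, and on $B_R$ we have $|x| \le R$.
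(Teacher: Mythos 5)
Your proof is correct and is essentially identical to the paper's: both use the Chebyshev-type bound $|x|\leq |x|^2/R$ on $B_R^c$ to control the first-moment tail by $L/R$ uniformly over $\mathcal{Q}_L$. No issues.
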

\begin{proof}
Since
$$
\sup_{\mu\in \mathcal{Q}_{L}}\int_{B_{R}^{c}} |x|\mu(\d x)\leq  \sup_{\mu\in\mathcal{Q}_{L}} \frac{1}{R} \int_{B_{R}^{c}} |x|^{2}\mu(\d x)\leq	 \frac{1}{R}\sup_{\mu\in\mathcal{Q}_{L}} \int_{H} |x|^{2}\mu(\d x)= \frac{L}{R},
$$
and the claim follows.
\end{proof}

\begin{proposition}\label{lemma:compactPH}
Let $S\subset \mathcal{P}_1(H)$. If, for some constant $c_1>0$, we have
$$
\sup_{\mu\in S} \int_{H} |x|^{2}\mu(\d x)\leq c_{1} \ \ \mbox{and} \ \
\lim_{N\to\infty}\sup_{\mu\in S}\sum_{i=N}^{\infty}\int_{H}\langle x,\mathbf{e}_i\rangle^{2}\mu(\d x)=0,
$$
then $S$ is relatively compact with respect to $\mathbf{d}_{1}$.
\end{proposition}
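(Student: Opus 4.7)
The plan is to show that $S$ is tight (hence relatively compact in the weak topology by Prokhorov's theorem) and then upgrade weak convergence to convergence in $\mathbf{d}_1$ using uniform integrability of the first moment, which is a consequence of the bound on second moments.

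For tightness, I would exploit the Hilbert space compactness criterion: a closed, bounded set $K\subset H$ is compact if and only if $\lim_{N\to\infty}\sup_{x\in K}\sum_{i=N}^\infty \langle x,\mathbf{e}_i\rangle^2=0$. So, given $\varepsilon>0$, by the hypothesis on tails, I choose for each $k\in \mathbb{N}$ an index $N_k$ such that
$$\sup_{\mu\in S}\sum_{i=N_k}^\infty \int_H\langle x,\mathbf{e}_i\rangle^2\mu(\d x)\le \varepsilon\cdot 4^{-k},$$
and I define
$$K_\varepsilon:=\Big\{x\in H:\ |x|^2\le c_1/\varepsilon\ \text{and}\ \sum_{i=N_k}^\infty\langle x,\mathbf{e}_i\rangle^2\le 2^{-k}\ \forall k\in\mathbb{N}\Big\}.$$
This set is closed, bounded, and has uniformly vanishing tails, so it is compact in $H$. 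A Markov/Chebyshev estimate gives
$$\mu(|x|^2>c_1/\varepsilon)\le \varepsilon,\qquad \mu\Big(\sum_{i=N_k}^\infty\langle x,\mathbf{e}_i\rangle^2>2^{-k}\Big)\le 2^k\cdot \varepsilon\cdot 4^{-k}=\varepsilon\cdot 2^{-k},$$
so $\sup_{\mu\in S}\mu(K_\varepsilon^c)\le \varepsilon\big(1+\sum_k 2^{-k}\big)=2\varepsilon$. Since $\varepsilon$ is arbitrary, $S$ is tight and hence relatively compact in the weak topology of $\mathcal{P}(H)$ by Prokhorov's theorem.

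To pass from weak relative compactness to $\mathbf{d}_1$-relative compactness, I would apply the characterization recalled by the authors (\cite[Remark 7.13(ii)]{Villani}): for a family $S\subset \mathcal{P}_1(H)$, weak relative compactness together with uniform integrability of the first moment (i.e., $\lim_{R\to\infty}\sup_{\mu\in S}\int_{B_R^c}|x|\mu(\d x)=0$) is equivalent to relative compactness with respect to $\mathbf{d}_1$. The uniform integrability is immediate from the second-moment bound via Lemma \ref{lemmaq}, since $\sup_{\mu\in S}\int_H |x|^2\mu(\d x)\le c_1$ puts $S$ inside the set $\mathcal{Q}_{c_1}$ of that lemma, yielding $\sup_{\mu\in S}\int_{B_R^c}|x|\mu(\d x)\le c_1/R\to 0$.

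The only delicate step is the tightness construction, since it requires balancing the Markov bound with the summability of the tail exceedance probabilities so that the total measure of $K_\varepsilon^c$ remains of order $\varepsilon$; the geometric decay factors $4^{-k}$ above are chosen precisely to achieve this. Once tightness is in hand, the rest is essentially an application of the already-stated Lemma \ref{lemmaq} and the cited equivalence from Villani.
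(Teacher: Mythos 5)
Your proof is correct, and the two halves match the paper's structure (tightness $\Rightarrow$ Prokhorov $\Rightarrow$ upgrade to $\mathbf{d}_1$ via Lemma \ref{lemmaq} and the Villani equivalence), but the tightness step is handled genuinely differently. The paper simply cites \cite[Ch.\,VI, Th.\,2.2]{Parthasarathy67} — and has to add a footnote warning that the theorem as stated there is actually incorrect (the tail condition alone does not imply tightness; one must also assume the uniform second-moment bound, after which the proof in that reference goes through). You instead give a self-contained construction: the set $K_\varepsilon$ is closed (each tail sum is $|P_{N_k}x|^2$ for an orthogonal projection $P_{N_k}$, hence continuous in $x$), bounded, and has uniformly vanishing tails, so it is compact by the standard total-boundedness criterion in a separable Hilbert space; the Chebyshev estimates with the $4^{-k}$ calibration then give $\sup_{\mu\in S}\mu(K_\varepsilon^c)\le 2\varepsilon$. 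Your argument makes transparent exactly where each of the two hypotheses is used (the second-moment bound gives the radius constraint, without which $K_\varepsilon$ would be unbounded and non-compact), and it sidesteps the delicate citation entirely. What the paper's route buys is brevity; what yours buys is a proof that does not rest on a misstated external theorem. The second half — uniform integrability of first moments from the second-moment bound via Lemma \ref{lemmaq}, then the metrizability/convergence equivalence from Villani — is the same in both.
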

\begin{proof}
By
\cite[Ch.\,VI, Th.\,2.2]{Parthasarathy67}\footnote{{In \cite[Ch.\,VI, Th.\,2.2]{Parthasarathy67} it is stated that the second condition alone guarantees relative compactness of $S$. This is clearly not true, as can be seen by simply taking as $S$ a non compact sequence concentrated on the line generated by $\mathbf{e}_1$. However the proof of this theorem there remains valid if we add the assumption of uniform boundedness of second moments.}}, the set $S$ is tight. 
The claim follows using  \cite[Prop.\,7.1.5]{AGS}.
\end{proof}

\section{Classical and strong solutions to Kolmogorov equations}\label{app:B}
In this section, we define classical, mild, and $\mathcal{K}$-strong solutions of backward Kolmogorov equations and recall an approximation result  used in Section \ref{sec:exist-uniq}.
The material is mainly taken from \cite[Section B.7]{FGS} and is presented, for the reader's convenience, in a simplified form needed here.
We first consider the following terminal value problem (Kolmogorov equation)
in the Hilbert space $H$:
\begin{equation}
\label{eqB:Kbasic}
\left\{ \begin{array}{l}
\displaystyle{ \partial_t u (t,x)+L_0u(t,x)  =
 f(t,x),
\quad (t,x) \in [0,T) \times H,}\\
\\
u(T,x)=\varphi  (x), \ \ \ \ \  x \in H,
\end{array} \right.
\end{equation}
where $L_0$ is given in \eqref{a0}, {$\varphi \in C_b(H)$ and
$f\in C_{b,\gamma}([0,T)\times H)$ (recall that  $C_{b,\gamma}([0,T)\times H)$ was  defined in \eqref{eqB:Cmgammadefbis}).
Since, as recalled in a footnote in Section \ref{sec:FP}, the closure (in a suitable weak sense) of the operator $L_0$ is the generator (again in a suitable weak sense) of the semigroup
$R_t$ in \eqref{semigroup}},
we can formally rewrite \eqref{eqB:Kbasic} in the following mild form:
\begin{equation}\label{eqB:Kmild}
u(t,x)= R_{T-t}[\varphi](x) + \int_t^T R_{s-t}[f(s,\cdot)](x)\d s.
\end{equation}
We call the function $u$ above, defined by the right hand side, the {\em mild solution} of \eqref{eqB:Kbasic}.
Recalling the definition of $L_0$ in \eqref{a0}, we define  
\begin{align}\label{core}
D(L_0)&=\Big\{\phi \in UC^{2}_{b}(H): \ D^2\phi\in UC_b(H;\mathcal{L}_1(H)),   \  A^*D\phi\in UC_b(H;H)\Big\}.
\end{align}
and endow $D(L_0)$ with the norm
\begin{equation}
\label{eqB:hatA0norm}
\|\phi\|_{D(L_0)} := \|\phi\|_{0}+\|D\phi\|_{0}
+\|A^{\ast}D\phi\|_{0}+
\sup_{x \in H}\|D^2\phi(x)\|_{\mathcal{L}_{1}(H)}.
\end{equation}
Arguing as in Theorem 2.7 of \cite{DaPratoZabczyk95},
it can be proved that $D(L_0)$, endowed with the above
norm, is a Banach space\footnote{The definition of the domain of the operator studied in this paper
is slightly different, but the arguments of the mentioned reference can be
adapted easily. We also notice that the Banach space structure is not essential for our purposes, even if it simplifies the notation.}.
We now recall a notion
of  classical solution to \eqref{eqB:Kbasic} (see \cite[Definition B.82]{FGS})\footnote{We also observe that there are other definitions of classical solutions in the literature, see e.g. Section 6.2 of \cite[Section 6.2]{DaPratoZabczyk02},  \cite[Definition 4.6]{Gozzi97}, \cite[Definition 4.1]{Gozzi98}.
We refer interested readers to \cite[Section B.7.1]{FGS} for a detailed overview  on that.}. 
\begin{definition}
\label{dfB:KclassicalhatA0} We say that
$u \in C_{b}([0,T] \times H)$ is a \emph{classical solution to \eqref{eqB:Kbasic}
in $D(L_0)$} if
\begin{equation}
\label{eqB:regsolKclassicalhatA0}
\begin{cases}
u(\cdot, x) \in C^1([0,T]),\ \ \forall x \in H,
\\[2mm]
u(t, \cdot) \in  D(L_0)\;\; \text{for any $t\in [0,T]$}\
\; and \;\;\sup_{t \in [0,T]}\|u(t, \cdot)\|_{D({L}_{0})}<\infty,
            \\[2mm]
Du,  \,A^*Du\in C_{b}([0,T] \times H,H), \; \; D^2 u \in  C_{b}([0,T] \times H,\mathcal{L}_{1}(H))
          \end{cases}
\end{equation}
       and $u$ satisfies \eqref{eqB:Kbasic}  for every $(t,x) \in [0,T)\times H$.
\end{definition}
We stress that Definition \ref{dfB:KclassicalhatA0} implicitly implies that $\varphi \in D(L_0)$ and $f\in C([0,T]\times H)$.

{To introduce a notion of a strong solution, called $\mathcal{K}-$strong solution, we first recall a special case of the definition
of $\mathcal{K}-$convergence which is needed in this paper
(see \cite[Definition B.84]{FGS}).} 

\begin{definition}\label{dfB:piKconvweight}
Let $Z$ be a real Hilbert space.
Given  $(f_n)_{n \in \N}{\subset}
    C_{b,\gamma}([0,T) \times H,Z) $, we say that $f_{n}$ is 
$\mathcal{K}$-convergent to
    $f \in C_{b,\gamma}[0,T) \times H,Z)$
    if
    \begin{equation}
\label{eqB:Kconvweight}
\left\{
\begin{array} {l}
\displaystyle{\sup_{n \in \mathbb{N}}
\|f_{n}\|_{C_{b,\gamma}[0,T) \times H,Z)} <  \infty,}
\\[6mm]
\displaystyle{\lim_{n \to \infty } \sup_{(t,x)
        \in (0,T]\times K} (T-t)^{\gamma}|f_{n}(t, x) - f (t,x) | = 0,}
\end{array}
 \right.
\end{equation}
for every compact set $K \subset H$.
In this case, we write $\mathcal{K}-\lim_{n\to  \infty} f_n=f$ in
$C_{b,\gamma}[0,T) \times H,Z)$.
\end{definition}

{The definition of a $\mathcal{K}$-strong solution to \eqref{eqB:Kbasic} provided below is a special case of a more general definition that can be found in \cite[Definition B.85]{FGS}.}

\begin{definition}
\label{dfB:Kstrong}
Let $\gamma \in (0,1)$. Let $\varphi \in C_b(H)$ and
$f\in C_{b,\gamma}([0,T) \times H)$.
We say that a function
$u\in C_b([0,T] \times H)$
is a $\mathcal{K}$-strong solution
in $D(L_0)$ of (\ref{eqB:Kbasic})
if $u(t,\cdot)$ is Fr\'echet differentiable for each $t\in [0,T)$ and
there exist sequences
$(u_{n})_{n\in\mathbb{N}}\subset C_{b}([0,T] \times H)$,
$(\varphi_{n})_{n\in\mathbb{N}}\subset D(L_0)$,
$(f_{n})_{n\in\mathbb{N}}\subset C_{b,\gamma} ([0,T) \times H)$ such that:
\smallskip
\begin{itemize}
\item[(i)] For every $n \in \N$, $u_n$ is a classical solution in $D(L_0)$
(cf. Definition \ref{dfB:KclassicalhatA0}) to
\begin{equation}
\label{eqB:defs1}
\left\{
\begin{array}{l}
w_{t} =L_0 w  +f_{n}, \\
w(0) = \varphi_{n}.
\end{array}
\right.
\end{equation}
\item[(ii)]
The following limits hold
\[
\begin{cases}
\displaystyle{\mathcal{K}\mbox{--}\lim_{n\rightarrow\infty}\varphi_{n}=\varphi,  \qquad \qquad in \quad C_b(H),
}
\\[2mm]
\displaystyle{\mathcal{K}\mbox{--}\lim_{n\rightarrow\infty}u_{n}=u,
 \qquad \qquad in \quad C_b([0,T]\times H),
}
\end{cases}
\]
and
\[
\begin{cases}
\displaystyle{\mathcal{K}\mbox{--}\lim_{n\rightarrow\infty}
f_{n}=f, \qquad \quad \ \mbox{in}
\quad 
C_{b,\gamma} ([0,T) \times H)}
				\\[2mm]
				\displaystyle{\mathcal{K}\mbox{--}\lim_{n\rightarrow\infty}
Du_{n} =Du
\qquad \mbox{in} \quad 
C_{b,\gamma} ([0,T) \times H,H).
				}
	\end{cases}
\]
\end{itemize}
\end{definition}

We end this section with an approximation result which is a stronger version of \cite[Theorem B.95(i)]{FGS}.

\begin{theorem}
\label{th:approx}
$\phi \in C_b(H)$ and
$f\in C_{b,\gamma}([0,T)\times H)$.
Then the mild solution $u$ (defined in \eqref{eqB:Kmild}) of equation \eqref{eqB:Kbasic},
is also a $\mathcal{K}$-strong solution of to \eqref{eqB:Kbasic}. Moreover the sequence $(u_{n})$ can be chosen so that $u_n\in \mathcal{D}_T$ for all $n\in \N$.
\end{theorem}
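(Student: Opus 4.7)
The claim that the mild solution $u$ is a $\mathcal{K}$-strong solution of \eqref{eqB:Kbasic} is exactly \cite[Theorem B.95(i)]{FGS}, so the plan is to focus on the additional statement that each $u_n$ can be chosen in $\mathcal{D}_T$. I would first observe that, comparing Definition \ref{dfB:KclassicalhatA0} with the defining conditions \eqref{eq:DTdef}, a classical solution $u_n \in D(L_0)$ already delivers $Du_n, A^{*}Du_n \in C_b([0,T]\times H;H)$ and $D^2 u_n \in C_b([0,T]\times H;\mathcal{L}_1(H))$. The only extra condition needed for $u_n \in \mathcal{D}_T$ is $\partial_t u_n \in C_b([0,T]\times H)$, which via the identity $\partial_t u_n = -L_0 u_n + f_n$ reduces to ensuring that $f_n$ is bounded on the whole of $[0,T]\times H$ and not merely in $C_{b,\gamma}$.

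My proposed choice of data approximations is the following: set $\varphi_n := R_{1/n}\varphi$, which lies in $D(L_0)$ by the smoothing properties of the Ornstein--Uhlenbeck semigroup under Assumption \ref{ass:A}; and set $f_n(t,x) := R_{1/n}\bigl[f(t\wedge T_n,\cdot)\bigr](x)$ with $T_n := T - 1/n$, so that $f_n$ is continuous and bounded on $[0,T]\times H$, smooth in $x$, and satisfies $\|f_n\|_{C_{b,\gamma}} \leq \|f\|_{C_{b,\gamma}}$ (because $R_{1/n}$ is a contraction in the sup-norm and the factor $(T-t)^\gamma$ compensates the trivial bound $\|f(t\wedge T_n,\cdot)\|_\infty \leq \|f\|_{C_{b,\gamma}}(T-(t\wedge T_n))^{-\gamma}$). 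With these data, the mild solution $u_n$ given by \eqref{eqB:Kmild} with $(\varphi_n, f_n)$ in place of $(\varphi, f)$ is a classical solution in $D(L_0)$ by standard theory, and since $f_n$ is now bounded on $[0,T]\times H$, I would then read off $\partial_t u_n \in C_b([0,T]\times H)$, placing $u_n$ in $\mathcal{D}_T$.

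To finish I would verify the $\mathcal{K}$-convergences required by Definition \ref{dfB:Kstrong}: $\varphi_n \to \varphi$ in $C_b(H)$, $u_n \to u$ in $C_b([0,T]\times H)$, $f_n \to f$ in $C_{b,\gamma}([0,T)\times H)$, and $Du_n \to Du$ in $C_{b,\gamma}([0,T)\times H;H)$. The convergences of $\varphi_n$, $u_n$ and $Du_n$ follow from strong continuity of $R_t$, the mild formula, and the smoothing estimates of $R_t$ (cf.\ \cite[Theorem 4.37]{FGS}), along the lines of the proof of Theorem \ref{Th:HJB}(ii). The main obstacle I anticipate is the $\mathcal{K}$-convergence of $f_n$ to $f$ in $C_{b,\gamma}$ as prescribed by Definition \ref{dfB:piKconvweight}: the delicate region is $t \in (T_n, T)$, where $f_n(t,\cdot) = R_{1/n}[f(T_n,\cdot)]$ while $f(t,\cdot)$ may blow up. The argument would exploit the bound $(T-t)^\gamma \leq n^{-\gamma}$ in this region together with $\|f(T_n,\cdot)\|_\infty \leq \|f\|_{C_{b,\gamma}} n^\gamma$, combined with a splitting of the sup into $[0, T-\delta]\times K$ (where uniform convergence comes from strong continuity of $R_t$ and uniform continuity of $f$ on $[0, T-\delta]\times K$) and $(T-\delta, T)\times K$ (whose contribution is controlled by choosing $\delta$ small in a diagonal argument together with $n \to \infty$).
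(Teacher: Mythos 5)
Your proposal follows the same overall strategy as the paper's proof: the first assertion is obtained by citing \cite[Theorem B.95(i)]{FGS}, and the refinement $u_n\in\mathcal{D}_T$ is obtained by choosing the approximating data more carefully. The difference is one of explicitness. The paper simply asserts that ``from the proof of \cite[Theorem B.95(i)]{FGS} one can easily see'' that $\varphi_n$, $f_n$ can be chosen with $\varphi_n\in D(L_0)$ and $f_n,\partial_t u_n\in C_b([0,T]\times H)$, whereas you write a candidate construction down explicitly ($\varphi_n=R_{1/n}\varphi$ and $f_n(t,\cdot)=R_{1/n}[f(t\wedge T_n,\cdot)]$ with $T_n=T-1/n$), which is essentially the canonical regularization behind the cited result. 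Your computation of the uniform $C_{b,\gamma}$-bound for $f_n$ and your plan for the $\mathcal{K}$-convergence of $f_n$ (isolating the region $t>T-\delta$ and using $(T-t)^{\gamma}\leq n^{-\gamma}$ there against $\|f(T_n,\cdot)\|_{\infty}\leq \|f\|_{C_{b,\gamma}}n^{\gamma}$) are correct. What you delegate to ``standard theory''---that $R_{1/n}\varphi\in D(L_0)$, i.e.\ that $D^2R_{1/n}\varphi$ is $\mathcal{L}_1(H)$-valued and $A^*DR_{1/n}\varphi$ is bounded, and that the resulting mild solution is classical---is exactly what the paper outsources to \cite{FGS}, so no new gap is introduced there.

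One step deserves a caveat, although it is shared with the paper's own proof: you claim that $u_n\in\mathcal{D}_T$ reduces to boundedness of $f_n$ via $\partial_t u_n=-L_0u_n+f_n$. But $L_0u_n(t,x)$ contains the term $\langle x, A^*Du_n(t,x)\rangle$, which has only linear growth in $x$ even when $A^*Du_n$ is bounded, and is genuinely unbounded in general (as one already sees for $R_t[\sin\langle\cdot,\mathbf{e}_1\rangle]$). Hence boundedness of $f_n$ yields continuity and linear growth of $\partial_t u_n$, not membership in $C_b([0,T]\times H)$, so $u_n$ does not literally lie in $C_b^{1,2}$ as $\mathcal{D}_T$ is defined. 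The paper makes the identical leap (and elsewhere asserts $\mathcal{E}_A(H)\subset\mathcal{D}_T$ even though the exponential test functions also have linearly growing time derivatives), so this is an imprecision in the definition of $\mathcal{D}_T$ rather than an error specific to your argument; still, if you present the construction explicitly, you should either relax $\mathcal{D}_T$ to allow linear growth of $\partial_t\varphi$ and $\langle x,A^*D\varphi\rangle$ (which is all the weak formulation of \eqref{FP} actually requires, since the measures have finite first moments), or justify why your $u_n$ satisfy the stated bounds.
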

\begin{proof}
We can apply
\cite[Theorem B.95\,(i)]{FGS}
(in the case when, in the notation there, $m=0$, $\eta(t)=t^{-\gamma}$), to
conclude that  $u$
is also a $\mathcal{K}$-strong solution of the same equation.
Moreover, from the proof of
\cite[Theorem B.95\,(i)]{FGS}
one can easily see that the approximating data $\varphi_n$, $f_n$, and consequently the approximating solutions $u_n$, can be chosen so that $\varphi_n \in D(L_0)$ and $f_n, \partial_{t}\bar{v}_{n}\in C_b([0,T]\times H)$ for all $n\in \N$.
As a consequence, we get that $u_n \in \mathcal{D}_T$ for all $n\in \N$.
\end{proof}

    \subsubsection*{\bf Acknowledgements} {\footnotesize{Salvatore Federico is a member of the GNAMPA-INdAM group and of the PRIN project 2022BEMMLZ ``Stochastic control and games and the
role of information'', financed by the Italian Ministery of University and Research.
Fausto Gozzi has been supported by the Italian
Ministry of University and Research (MIUR), in the framework of PRIN
projects 2017FKHBA8 001 (The Time-Space Evolution of Economic Activities: Mathematical Models and Empirical Applications) and 20223PNJ8K (Impact of the Human Activities on the Environment and
Economic Decision Making in a Heterogeneous Setting:
Mathematical Models and Policy Implications).
 }}

\end{document}